\DeclareMathAlphabet{\mathbf}{OT1}{cmr}{bx}{it}
\newcommand{\vb}{{\mathbf b}}
\newcommand{\ve}{{\mathbf e}}
\newcommand{\vf}{{\mathbf f}}
\newcommand{\vq}{{\mathbf q}}
\newcommand{\vr}{{\mathbf r}}
\newcommand{\vu}{{\mathbf u}}
\newcommand{\vv}{{\mathbf v}}
\newcommand{\vw}{{\mathbf w}}
\newcommand{\vx}{{\mathbf x}}
\newcommand{\vy}{{\mathbf y}}
\newcommand{\vnull}{\boldsymbol{0}}
\newcommand{\spK}{{\cal K}}
\renewcommand{\d}{\,\mathrm{d}}
\newcommand{\R}{\mathbb{R}}
\newcommand{\C}{\mathbb{C}}
\newcommand{\CN}{\mathbb{C}^N}
\newcommand{\CNN}{\mathbb{C}^{N \times N}}
\newcommand{\CNm}{\mathbb{C}^{N \times m}}
\newcommand{\real}{\mathfrak{R}}
\newcommand{\AHA}{A^H\!\!A}
\DeclareMathOperator{\Span}{span}
\DeclareMathOperator{\sign}{sign}
\newcommand{\SHS}{S^H\!S}
\let\oldexample\example
\renewcommand{\example}{\oldexample\normalfont}
\newtheorem{remarksimple}[theorem]{Remark}
\let\oldremarksimple\remarksimple
\renewcommand{\remarksimple}{\oldremarksimple\normalfont}
\newenvironment{remark}{\begin{remarksimple}}{\hfill$\diamond$\end{remarksimple}}
\let\oldexperiment\experiment
\renewcommand{\experiment}{\oldexperiment\normalfont}
\renewcommand{\real}{\mbox{Re}}
\tikzset{
  nomorepostactions/.code={\let\tikz@postactions=\pgfutil@empty},
  mymark/.style 2 args={decoration={markings,
    mark= between positions 0 and 1 step (1/9)*\pgfdecoratedpathlength with{%
        \tikzset{#2,every mark}\tikz@options
        \pgfuseplotmark{#1}%
      },  
    },
    postaction={decorate},
    /pgfplots/legend image post style={
        mark=#1,mark options={#2},every path/.append style={nomorepostactions}
    },
  },
}
\pgfplotsset{compat=1.17}
\title{Randomized sketching for Krylov approximations \\ of large-scale matrix functions}
\author{Stefan G\"uttel\thanks{Department of Mathematics, The University of Manchester, M13\,9PL Manchester, United Kingdom, \texttt{stefan.guettel@manchester.ac.uk}.  S.\,G. acknowledges a fellowship from The Alan Turing Institute under the EPSRC grant EP/W001381/1.} \and Marcel Schweitzer\thanks{School of Mathematics and Natural Sciences, Bergische Universit\"at Wuppertal, 42097 Wuppertal, Germany, \texttt{marcel@uni-wuppertal.de}.}}
\date{\today}
\begin{document}

\newcommand{\rev}[1]{{\color{black}#1}}
\newcommand{\revv}[1]{{\color{black}#1}}

\renewcommand{\thefootnote}{\fnsymbol{footnote}}
\maketitle \pagestyle{myheadings} \thispagestyle{plain}
\markboth{S. G\"UTTEL AND M. SCHWEITZER}{RANDOMIZED SKETCHING OF MATRIX FUNCTIONS}

\begin{abstract}
The computation of $f(A)\vb$, the action of a matrix function on a vector, is a task arising in many areas of scientific computing. In many applications, the matrix $A$ is sparse but so large that only a rather small number of Krylov basis vectors can be stored. Here we discuss a new approach to overcome this limitation by randomized sketching combined with an integral representation of $f(A)\vb$. Two different approximation methods are introduced, one based on sketched FOM and another based on sketched GMRES. The convergence of the latter method is analyzed for Stieltjes functions of positive real matrices. We also derive a closed form expression for the sketched FOM approximant and bound its distance to the full FOM approximant. Numerical experiments demonstrate the potential of the presented sketching approaches.     
\end{abstract}

\begin{keywords}
matrix function, Krylov method, sketching, randomization, GMRES, FOM
\end{keywords}

\begin{AMS}
65F60, 65F50, 65F10, 68W20 
\end{AMS}


\section{Introduction}

The computation of $f(A)\vb$, the action of a function $f$ of $A\in\CNN$ on a vector $\vb\in\mathbb{C}^N$, is a task arising in many areas of scientific computing. By far the most popular  methods for this task are polynomial~\cite{DruskinKnizhnerman1989,Saad1992} and  rational~\cite{Guettel2013,GuettelKnizhnerman2013,vdEH06,DruskinKnizhnerman1998} Krylov methods. In many applications, the matrix $A$ is sparse but so large that only a rather small number of Krylov basis vectors of size~$N$ can be stored. Furthermore, for non-Hermitian matrices $A$, the arithmetic cost of orthogonalizing a Krylov basis can become overwhelming. This naturally limits the attainable accuracy of Krylov methods which perform full orthogonalization  and need to store at least one additional vector per iteration. Several approaches are available for overcoming the memory problem, including
\begin{itemize}
\item  two-pass Krylov methods for Hermitian $A$ as in~\cite{Borici2000,FrommerSimoncini2008}, which roughly double the computational effort,
\item methods based on a-priori rational approximation of $f$ on a spectral region of~$A$, followed by a (short recurrence) Krylov iteration for the resulting shifted linear systems of equations~\cite{VanDenEshofFrommerLippertSchillingVanDerVorst2002,FrommerMaass1999}, and 
\item restarted Krylov methods~\cite{AfanasjewEtAl2008a,EiermannErnst2006,FrommerGuettelSchweitzer2014a,FrommerGuettelSchweitzer2014b,IlicTurnerSimpson2010,Schweitzer2016thesis,TalEzer2007} which, similar to restarted methods for linear systems, construct a series of Krylov iterates in such a way that each ``cycle'' of the method only requires a fixed amount of storage and fixed cost for orthogonalization.
\end{itemize}
We also refer the reader to the recent survey~\cite{GuettelKressnerLund2020} covering limited-memory polynomial methods for the general $f(A)\vb$ problem, and more specifically to~\cite{GS21} for the case of Stieltjes functions of Hermitian matrices.

In this paper we discuss a new technique to overcome the issues of excessive memory requirements and orthogonalization cost in Krylov  methods for the $f(A)\vb$ problem. Our approach is based on the sketched \rev{Krylov}  approximation of the shifted linear systems $(t I + A)\vx(t) = \vb$ arising with the integral form
\[
f(A)\vb =  \int_\Gamma f(t) (t I + A)^{-1}\vb \d\mu(t) =  \int_\Gamma \vx(t) \d \mu(t).
\]
This representation exists for any function $f$ that is analytic on and inside a closed contour $\Gamma$ that encloses the negated spectrum $-\Lambda(A)$. In the case $\mathrm{d}\mu(t) = -(2\pi i)^{-1}\mathrm{d}z$ we obtain the Cauchy integral representation of $f(-A)\vb$; see~\cite[Def.~1.11]{Higham2008}. The above integral  representation also contains the important class of Stieltjes functions, in which case $\Gamma = [0,+\infty)$ and 
$\mu(t)$  is a monotonically increasing and nonnegative function on $\Gamma$ with $\int_\Gamma 1/(t+1)\d \mu(t)<\infty$; see, e.g., \cite{Henrici1977}. \rev{Important examples of Stieltjes functions are $f(z)=z^{-\alpha}$ for $\alpha\in (0,1)$ and $f(z) = \log(1+z)/z$. 
Some other interesting functions like $\widetilde f(z) = z^\alpha$ for $\alpha\in (0, 1)$, including the square
root, and $\widetilde f(z) = \log(1 + z)$ can be written as $\widetilde f(z) = zf(z)$ with a Stieltjes function~$f$.}

The shifted linear systems $(t I + A)\vx(t) = \vb$  can be solved in various ways, and here we focus on Krylov methods that are accelerated by a \emph{sketch-and-solve approach}; see, e.g., \rev{\cite{sarlos2006improved,balabanov2019randomized,balabanov2021randomized,balabanov2021randomizedblock,nakatsukasa2021fast,balabanov2022randomized}. 
The workhorse of sketching is an embedding matrix $S\in\mathbb{C}^{s\times N}$ \rev{with $s\ll N$} that distorts the Euclidean norm $\|\,\cdot\,\|$ of vectors in a controlled manner~\cite{sarlos2006improved,woodruff2014sketching}.} More precisely, given a positive integer $m$ and some $\varepsilon\in [0,1)$, we assume that $S$ is such that for all vectors~$\vv$ in the Krylov space $\mathcal{K}_{m+1}(A,\vb) := \Span\{\vb, A\vb, A^2\vb, \dots, A^{m}\vb\}$,
\begin{equation}
(1-\varepsilon) \| \vv \|^2 \leq \| S \vv\|^2 \leq (1+\varepsilon) \|\vv\|^2.
\label{eq:sketch}
\end{equation}
The matrix $S$ is also called an \emph{$\varepsilon$-subspace embedding} for $\mathcal{K}_{m+1}(A,\vb)$. 
Condition~\eqref{eq:sketch} can equivalently be stated with the Euclidean inner product: for all $\vu,\vv \in \mathcal{K}_{m+1}(A,\vb)$ 
\begin{equation}{\label{eq:sketch_innerproduct}}
\langle \vu, \vv \rangle - \varepsilon \| \vu\|\cdot \|\vv\|
            \leq \langle S\vu, S\vv \rangle 
            \leq \langle \vu, \vv \rangle + \varepsilon \| \vu\|\cdot \|\vv\|.
\end{equation}
 Of course, in practice, the matrix~$S$ is not explicitly available (not least as it requires knowledge of $\spK_{m+1}(A,\vb)$, which is only available in the final Krylov iteration $m$), and we hence have to draw it at random to achieve \eqref{eq:sketch} with high probability. 

In section~\ref{sec:sfom} below we focus our attention on the full orthogonalization method (FOM, \cite{Saad1992,Saad2003}) generalized to matrix functions $f(A)\vb$ using an integral representation of $f$. We show that the sketched FOM approximant admits a closed-form expression which is attractive for numerical evaluation and also allows us to bound the distance of this approximant to the full (non-sketched) FOM appoximant.   
In section~\ref{sec:sgmres} we use the generalized minimal residual method (GMRES, \cite{SaadSchultz1986}) to derive a sketched GMRES approximant that often exhibits a more stable convergence behavior than sketched FOM but requires numerical quadrature for its practical evaluation. We  prove convergence of these approximants for the important class of Stieltjes functions~$f$ and positive real matrices~$A$.  Section~\ref{sec:impl} is devoted to the discussion of implementation details. Following our previous work~\cite{FrommerGuettelSchweitzer2014a} we discuss how the sketched FOM and GMRES approximants can be evaluated using adaptive numerical quadrature. Section~\ref{sec:exp} contains discussions of some numerical experiments for medium and large-scale problems. We conclude in section~\ref{sec:concl} and provide an outlook on future work.

\section{Sketched FOM approximation}\label{sec:sfom}

The basis of polynomial Krylov methods for the approximation of $f(A)\vb$  is the   \emph{Arnoldi method}~\cite{Arnoldi1951}. Applying $m$ Arnoldi iterations with $A$ and $\vb$ yields the \emph{Arnoldi relation}
\begin{equation}\label{eq:arnoldi_relation}
AV_m = V_m H_m + h_{m+1,m}\vv_{m+1}\ve_m^T,
\end{equation}
with 
$
V_m = \left[\vv_1, \vv_2, \ldots, \vv_m \right] \in \CNm
$
containing an orthonormal basis of the Krylov space 
$
\spK_m(A,\vb) := \Span\{\vb, A\vb, A^2\vb, \dots, A^{m-1}\vb\},
$
and $[V_m,\vv_{m+1}]$ being an orthonormal basis of $\mathcal{K}_{m+1}$. The matrix $H_m$ is unreduced upper-Hessenberg and $\ve_m$ denotes the $m$th canonical unit vector in $\mathbb{R}^m$. 

The \emph{Arnoldi (or FOM) approximation} $\vf_m \approx f(A)\vb$ is obtained by projecting the original problem onto the Krylov space and evaluating $f$ for a small $m\times m$ matrix: 
\begin{equation}\label{eq:arnoldi_approximation}
\vf_m := V_mf(V_m^\dagger AV_m)V_m^\dagger\vb,
\tag{FOM}
\end{equation}
where  $V_m^\dagger$ is the Moore--Penrose inverse of $V_m$. Due to orthonormality of the basis~$\{\vv_j\}$, we have $V_m^\dagger = V_m^H$ and $\vf_m =\|\vb\|V_mf(H_m)\ve_1$. However, it will be useful to write \eqref{eq:arnoldi_approximation} in this more general form with a possibly nonorthonormal~$V_m$. 

The evaluation of~\eqref{eq:arnoldi_approximation} requires the storage of the full Krylov basis $V_m$, i.e., $m$~vectors of size~$N$. The (modified) Gram--Schmidt orthogonalization process to compute the orthonormal Krylov basis $V_m$ requires $O(Nm^2)$ arithmetic operations.  
For sufficiently large $N$, memory requirements and orthogonalization time  impose a limit on the maximal number $m_{\max}$ of Krylov iterations that can be performed, and thereby a limit on the attainable accuracy of the \rev{FOM}  approximation. In the Lanczos method~\cite{Lanczos1950} for Hermitian matrices~$A$, the cost of orthogonalization is just $O(Nm)$ due to the short-term recurrence of the Krylov basis, but if the full vector $f(A)\vb$ needs to be approximated, a memory requirement of $O(Nm)$ generally remains. (A notable exception is the case $f(z)=z^{-1}$ where the short recurrence for the Lanczos vectors translates into a short recurrence for the iterates, resulting in the famous conjugate gradient method~\cite{HestenesStiefel1952}.)

Using the integral representation of $f(H_m)= f(V_m^\dagger A V_m)$  in~\eqref{eq:arnoldi_approximation}, 
\[
    \vf_m = \int_\Gamma  \|\vb\| V_m (t I + H_m)^{-1} \ve_1 \d\mu(t) 
    =  \int_\Gamma  \vx_m(t) \d\mu(t), 
\]
we find that the integrand contains the FOM \rev{(or Galerkin)}  approximants 
\begin{equation}\label{eq:galerkin}
\vx_m(t) := \|\vb\| V_m (t I + H_m)^{-1} \ve_1 :=  V_m \vy_m(t)
\end{equation}
for the solution $\vx(t)$ of the shifted linear systems $(t I + A)\vx(t)=\vb$. The residuals of these approximants are explicitly given by 
\[
    \vr_m(t) = \vb - (tI + A) \vx_m(t) = -\| \vb\| h_{m+1,m}(e_m^T (t I + H_m)^{-1}\ve_1)\vv_{m+1},
\]
i.e., $\vr_m(t) = \alpha(t) \vv_{m+1}$ is orthogonal to $\mathrm{span}(V_m)$. Now, instead of imposing this orthogonality condition fully, we  propose to merely  require that the sketched residual $S\vr_m(t)$ be orthogonal to the sketched span of the Krylov basis, $\mathrm{span}(SV_m)$, where $S$ is an $s\times N$ sketching matrix. \rev{This is the same as the sketched Galerkin orthogonality condition for parametric linear systems used in \cite{balabanov2019randomized}. More precisely,} we require that
\[
 \widehat \vx_m(t) = V_m\widehat \vy_m(t) \quad \text{with} \quad (SV_m)^H [S\vb - S(t I + A) \widehat \vx_m(t)] = \vnull,
\]
or equivalently (if the inverted quantity is well defined),
\begin{equation}\label{eq:ym_hat}
    \widehat \vx_m(t) = V_m\widehat \vy_m(t) \quad \text{with} \quad  
    \widehat \vy_m(t) = [(SV_m)^H (tSV_m + SAV_m)]^{-1} (SV_m)^H (S\vb).
\end{equation}
The \emph{sketched FOM approximant to $f(A)\vb$} is then naturally defined as
\begin{equation}
    \widehat \vf_m :=  \int_\Gamma  \widehat \vx_m(t) \d\mu(t)
    = V_m \int_\Gamma   [(SV_m)^H (tSV_m + SAV_m)]^{-1}\d\mu(t)  (SV_m)^H (S\vb).
    \label{eq:sfom}\tag{sFOM}
\end{equation}

Some immediate comments are in order.
\smallskip
\begin{enumerate}
    \item If $S=I$, \eqref{eq:arnoldi_approximation} and \eqref{eq:sfom} are the same approximants. 
    \item The sketched orthogonality condition is imposed explicitly in \eqref{eq:sfom}, hence there is no requirement for the Krylov basis $V_m$ to be orthogonal. This means that  $V_m$ can be constructed without orthogonalization or by using a truncated orthogonalization procedure; see section~\ref{sec:fomcompl}.
    \item The sketched matrices $SV_m$ and $SAV_m$ can be constructed on-the-fly during the Arnoldi iteration, being expanded by $S \vv_{m+1}$ and $S A \vv_{m+1}$  when the  new Krylov basis vector $\vv_{m+1}$ is appended to $V_m$. The matrix-vector product $A \vv_{m+1}$ can be reused in the following iteration so that the overall number of matrix-vector products with $A$ remains the same as for the Arnoldi procedure without sketching.
    \item If the full vector approximation $\widehat\vf_m$ defined by \eqref{eq:sfom} is needed, then $V_m$ will still need to be stored as $\widehat \vx_m(t) = V_m \widehat \vy_m(t)$. However, as opposed to the standard FOM approach, $V_m$ does not need to be (fully) orthogonal and hence  $V_m$ can be held on slow memory (e.g., hard disk). Full access to $V_m$ is only needed once the sketched FOM approximant $\widehat\vf_m$  is formed, but not during the basis generation. Alternatively, the sketched approximation also makes it viable to use a two-pass approach~\cite{Borici2000,FrommerSimoncini2008} in the  case of non-Hermitian~$A$.
    \item  If only a few (say, $\ell \ll N$) selected components of $\widehat\vf_m$ are needed or, more generally, a matrix-vector product $M \widehat\vf_m$ with a short matrix $M\in\mathbb{C}^{\ell\times N}$, then with truncated Arnoldi only $k+1$ basis vectors $\vv_j$ need to be kept in memory in addition to the small matrix $M V_m$.
\end{enumerate}

\subsection{A closed formula for sketched FOM} 

We now investigate the expression defining \eqref{eq:sfom} in a bit more detail. Provided that  \eqref{eq:ym_hat} is well defined, it is guaranteed that $SV_m$ is of full rank $m$ and that  $V_m^H\SHS V_m$ is nonsingular. We can therefore rewrite the expression appearing in square brackets   in~\eqref{eq:ym_hat} as 
\begin{align*}
& [tV_m^H\SHS V_m + V_m^H\SHS AV_m]^{-1} \\
&= (V_m^H\SHS V_m)^{-1}[tI + V_m^H\SHS AV_m(V_m^TS^TSV_m)^{-1}]^{-1},
\end{align*}
so that~\eqref{eq:sfom} can be further rewritten as
\begin{align}
\widehat \vf_m &= V_m \int_\Gamma [tV_m^H\SHS V_m + V_m^H\SHS AV_m]^{-1}\ \d\mu(t)\ (SV_m)^H (S\vb) \nonumber\\
           &= V_m (V_m^H\SHS V_m)^{-1}\int_\Gamma 
           [tI + V_m^H\SHS AV_m(V_m^H\SHS V_m)^{-1}]^{-1}\ \d\mu(t)\ (SV_m)^H (S\vb) \nonumber\\
           &= V_m (V_m^H\SHS V_m)^{-1} f\left(V_m^H\SHS AV_m(V_m^H\SHS V_m)^{-1}\right) (SV_m)^H (S\vb).\tag{sFOM'}\label{eq:sfom1}
\end{align}
Note that~\eqref{eq:sfom1} is  a closed formula for the sketched approximation, not involving any integration, just like the standard FOM approximation~\eqref{eq:arnoldi_approximation}. 

Both \eqref{eq:sfom} and~\eqref{eq:sfom1} are completely independent of the choice of  $V_m$ as long as $\mathrm{span}(V_m) = \mathcal{K}_m(A,\vb)$. As $SV_m$ is  of full rank $m$, for our analysis we may require without loss of generality that the sketched basis be orthonormal, i.e., \begin{equation}
\label{eq:white}
(SV_m)^H SV_m=I_m.
\end{equation}
In this case we obtain a much simpler expression 
\begin{equation}
    \widehat \vf_m =  V_m  f\left(V_m^H\SHS AV_m\right) V_m^H \SHS  \vb.
    \label{eq:sfom2}\tag{sFOM''}
\end{equation}

The ``basis whitening'' condition \eqref{eq:white} was first recommended in \cite{rokhlin2008fast}, and it is also used in~\cite{balabanov2022randomized,nakatsukasa2021fast} to stabilize sketched GMRES and eigenvalue computations. In \cite{balabanov2022randomized}, the basis whitening condition is enforced during the Gram--Schmidt orthonormalization process on sketched vectors. But it can also be imposed retrospectively {\color{black} at a lower computational cost:} if $SV_m= Q_m R_m$ is a thin QR decomposition of the (nonorthonormal) sketched basis $S V_m$, we simply replace
\[
    SV_m \leftarrow Q_m, \quad SAV_m  \leftarrow (SAV_m) R_m^{-1},  \quad V_m  \leftarrow V_m R_m^{-1} \ \text{\rev{(only implicitly!)}}
\]
in \eqref{eq:sfom2}, resulting in 
\begin{equation}\label{eq:sfom3}
    \widehat \vf_m =  V_m  \big(R_m^{-1} f\left(Q_m^H  S AV_m R_m^{-1}\right) Q_m^H  S  \vb\big).
    \tag{sFOM'{'}'}
\end{equation}

\revv{We remark that if $SV_m$ and hence $R_m$ are extremely ill-conditioned, it might be safer to replace $R_m^{-1}$ with a numerical pseudoinverse (though we have not found a need for that in any of our numerical tests reported in section~\ref{sec:exp}).}

\rev{
\subsection{Algorithm and computational complexity}\label{sec:fomcompl}
\begin{algorithm}[t]
\caption{\label{alg:sketched_fom}Sketched FOM approximation of  $f(A)\vb$}
\begin{algorithmic}[1]
\setstretch{1.2}
\smallskip

\Statex \textbf{Input:} \ \ $A\in\mathbb{C}^{N\times N}$, $\vb\in\mathbb{C}^N$, function $f$, integers $m < s\ll N$ 
\Statex \textbf{Output:} $\widehat{\vf}_m \approx f(A)\vb$
\State Draw sketching matrix $S \in \C^{s \times N}$
\State Generate (nonorthogonal) basis $V_m$ of $\mathcal{K}_m(A,\vb)$, as well as $SV_m$ and $SAV_m$ \label{line:orthog}
\State Compute thin QR decomposition $SV_m= Q_m R_m$\label{line:thin_qr}
\State $\widehat \vf_m \leftarrow  V_m \big(R_m^{-1} f\left(Q_m^H  S AV_m R_m^{-1}\right) Q_m^H  S  \vb\big)$\label{line:f}
\end{algorithmic}
\end{algorithm}

{A concise summary of our sketched FOM algorithm is given in Algorithm~\ref{alg:sketched_fom}. 
One of the simplest ways to generate the nonorthogonal Krylov basis $V_m$ is to use a \emph{$k$-truncated modified Gram--Schmidt method} whereby at any iteration $j$, the vector $\vv_{j}$ is orthogonal to the previous $k$ basis vectors $\vv_{j-1},\vv_{j-2},\ldots,\vv_{j-k}$ only (with vectors having nonpositive indices ignored). 

We now  discuss the computational cost of Algorithm~\ref{alg:sketched_fom} assuming that
\begin{itemize}
    \item $A$ is a sparse matrix with $O(N)$ nonzeros, 
    \item $V_m$ is computed using  $k$-truncated  Gram--Schmidt  where $k = O(1)$, and
    \item the sketching parameter is chosen as $s = O(m)$.
\end{itemize} 
Under these assumptions, computing the basis $V_m$ in line~\ref{line:orthog} of Algorithm~\ref{alg:sketched_fom} has a computational cost of $O(Nmk^2) = O(Nm)$. The cost of sketching $V_m$ and $AV_m$ depends on the specific choice of the sketching matrix $S$. For example, the subsampled random Fourier transform~\cite{WoolfeLibertyRokhlinTygert2008}, see also~\cite[Sec.~2.3.1]{nakatsukasa2021fast}, can be applied using $O(Nm\log m)$ arithmetic operations. Performing the thin QR decomposition in line~\ref{line:thin_qr} has cost $O(sm^2) = O(m^3)$. 

Crucially, for the computation in line~\ref{line:f} of Algorithm~\ref{alg:sketched_fom},  the full basis $V_m$ should  \emph{not} be transformed to $V_m R_m^{-1}$ explicitly, as this would incur a rather high cost of $O(Nm^2)$, the same as standard Gram--Schmidt orthogonalization. Instead, it is possible to compute the compressed matrix operating only on matrices of size $s \times m$ and $m \times m$, resulting in a computational complexity of $O(m^3)$. Evaluating $f$ on a (dense) matrix of size $m \times m$ typically also has a cost of $O(m^3)$ and finally afterwards forming the approximation $\widehat{\vf}_m$ requires another $O(Nm + m^2) = O(Nm)$ operations. 

In total, Algorithm~\ref{alg:sketched_fom} computes the sketched FOM approximant with a cost of at most $O(Nm\log m + m^3)$.}

\begin{remark}
Truncated (or incomplete) Arnoldi methods were first considered by Saad in the context of eigenvalue problems~\cite[Sec.~3.2]{Saad1980b} and linear systems~\cite[Sec.~3.3]{Saad1981}. For computations involving matrix functions, truncated Arnoldi has mostly been used for approximating the action of the matrix exponential, $\exp(-tA)\vb$, in exponential integrators, were the effects of incomplete orthogonalization can be attenuated by choosing a smaller time step $t$; see, e.g.,~\cite{GaudreaultRainwaterTokman2018,Koskela2015}. 
In all our experiments in this paper, we use $k$-truncated Arnoldi, with $k$ ranging from $2$ to $4$. 

There also exist other possibilities for constructing Krylov bases using a limited number of inner products (or no inner products at all), e.g., based on recurrence relations for Chebyshev polynomials~\cite[Sec.~4]{JoubertCarey1992} or Newton polynomials~\cite[Sec.~4]{PhilippeReichel2012}. For comparisons of the different approaches, we refer the reader to~\cite[Sec.~4]{nakatsukasa2021fast} and in particular the numerical experiments in~\cite[Sec.~5]{PhilippeReichel2012}. 
\end{remark}

\begin{remark}\label{rem:alice}
An alternative approach for computing randomized FOM approximants has been proposed recently and independently in \cite{cortinovis2022speeding}. Starting with the Arnoldi relation~\eqref{eq:arnoldi_relation}, one immediately finds that
\[
    V_m^\dagger A V_m = H_m + h_{m+1,m} V_m^\dagger \vv_{m+1}\ve_m^T.
\]
Now, $\mathbf{x}_m = V_m^\dagger \vv_{m+1}$ can be computed by solving a least-squares problem $\mathbf{x}_m = \arg \min_{\vx \in \mathbb{C}^m} \| \vv_{m+1} - V_m \mathbf x \|$, or one can cheaply approximate it by sketching:
\[
\widehat{\mathbf{x}}_m = \arg \min_{\vx \in \mathbb{C}^m} \| S\vv_{m+1} - (SV_m) \mathbf x \|.
\]
The authors of \cite{cortinovis2022speeding} then suggest to use the approximation $$\widehat\vf_m = V_m f(H_m +  h_{m+1,m} \widehat{\mathbf{x}}_m \ve_m^T) \|\vb\| \ve_1,$$ which turns out to be mathematically equivalent to our  sketched FOM approximant; see the discussion in \cite[section~3.3]{cortinovis2022speeding}. 
\end{remark}

\smallskip

Interestingly, the alternation  interpretation  in \cite{cortinovis2022speeding} allows us to characterize $\widehat\vf_m$ as an approximation obtained from the \emph{Arnoldi-like decomposition} 
\begin{eqnarray}
    A V_m &=& V_m (H_m +  h_{m+1,m} \widehat{\mathbf{x}}_m \ve_m^T) + h_{m+1,m} (\vv_{m+1} - V_m \widehat{\mathbf{x}}_m) \ve_m^T \nonumber\\
    &=:& V_m \widehat{H}_m  + h_{m+1,m}\widehat{\vv}_{m+1} \ve_m^T\label{eq:H_hat}
\end{eqnarray}
defined in \cite[eq.~(2.5)]{EiermannErnst2006}. By \cite[Thm.~2.4]{EiermannErnst2006}, we have the following result.
\begin{corollary}\label{cor:alice}
The sketched FOM approximant $\widehat{\mathbf{f}}_m$ to $f(A)\vb$ satisfies 
\begin{equation}\label{eq:interp}
\widehat{\mathbf{f}}_m = V_m f(\widehat{H}_m) \|\vb\| \ve_1 = q(A)\vb,
\end{equation}
where $q$ is the unique polynomial of degree at most $m-1$ that interpolates $f$ at the eigenvalues of $\widehat{H}_m$ defined in~\eqref{eq:H_hat}. 
\end{corollary}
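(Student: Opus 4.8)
The plan is to prove the two asserted equalities in turn. The first equality, $\widehat{\mathbf{f}}_m = V_m f(\widehat{H}_m)\|\vb\|\ve_1$, is already recorded in Remark~\ref{rem:alice} as the reformulation of \eqref{eq:sfom} obtained from the Arnoldi-like decomposition \eqref{eq:H_hat}, so I would simply invoke it. The substance therefore lies in the second equality, $V_m f(\widehat{H}_m)\|\vb\|\ve_1 = q(A)\vb$, which is the polynomial-interpolation characterization.

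First I would record the structural fact that $\widehat{H}_m$ is unreduced upper Hessenberg. Since $H_m$ from \eqref{eq:arnoldi_relation} is unreduced upper Hessenberg and the correction $h_{m+1,m}\widehat{\mathbf{x}}_m\ve_m^T$ in \eqref{eq:H_hat} is a rank-one matrix supported in the last column, adding it leaves the (nonzero) subdiagonal of $H_m$ untouched and introduces no fill-in below the subdiagonal; hence $\widehat{H}_m$ inherits the unreduced upper Hessenberg form. The consequence I need is the classical one: the bottom-left entry $\ve_m^T\widehat{H}_m^{\,j}\ve_1$ of the $j$th power vanishes for every $j < m-1$, because reaching the $(m,1)$ corner of an $m\times m$ Hessenberg matrix requires at least $m-1$ subdiagonal steps.

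Next I would establish the polynomial-exactness relation
\begin{equation*}
q(A)\vb = \|\vb\|\,V_m\,q(\widehat{H}_m)\,\ve_1
\qquad \text{for all polynomials } q \text{ of degree } \le m-1,
\end{equation*}
by induction over monomials $q(z)=z^{\,j}$. The base case $j=0$ holds since $\vv_1 = \vb/\|\vb\|$ gives $\vb = \|\vb\|\,V_m\ve_1$. For the inductive step with $j \le m-2$, I would apply $A$ to $A^{\,j}\vb = \|\vb\|\,V_m\widehat{H}_m^{\,j}\ve_1$ and substitute \eqref{eq:H_hat}; this yields the desired term $\|\vb\|\,V_m\widehat{H}_m^{\,j+1}\ve_1$ together with a spillover term proportional to $\ve_m^T\widehat{H}_m^{\,j}\ve_1$, which vanishes by the structural fact above precisely in the range $j \le m-2$. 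Extending linearly over monomials then gives the identity for all $q$ of degree at most $m-1$.

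Finally I would close the argument by choosing $q$ to be the interpolating polynomial of $f$ at the eigenvalues of $\widehat{H}_m$, taken in the confluent sense matching their multiplicities, so that $f(\widehat{H}_m) = q(\widehat{H}_m)$ holds by the standard definition of a matrix function. Combining this with the exactness relation and the first equality yields $\widehat{\mathbf{f}}_m = \|\vb\|\,V_m f(\widehat{H}_m)\ve_1 = \|\vb\|\,V_m q(\widehat{H}_m)\ve_1 = q(A)\vb$, which is exactly the content of \cite[Thm.~2.4]{EiermannErnst2006} specialized to the present decomposition. I expect the main obstacle to be the vanishing of the spillover term, i.e.\ the claim $\ve_m^T\widehat{H}_m^{\,j}\ve_1 = 0$ for $j<m-1$: everything hinges on verifying that $\widehat{H}_m$ is genuinely \emph{unreduced} upper Hessenberg, so that the corner entry cannot be activated before degree $m-1$. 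A secondary point worth stating explicitly is the confluent reading of ``interpolates'' when $\widehat{H}_m$ has repeated or defective eigenvalues, which is what makes $f(\widehat{H}_m)=q(\widehat{H}_m)$ legitimate.
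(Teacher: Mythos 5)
Your proposal is correct and follows essentially the same route as the paper: the paper obtains the first equality from the equivalence in Remark~\ref{rem:alice} and then simply cites \cite[Thm.~2.4]{EiermannErnst2006} applied to the decomposition~\eqref{eq:H_hat}, and your exactness argument (Hessenberg structure of $\widehat{H}_m$, vanishing of $\ve_m^T\widehat{H}_m^{\,j}\ve_1$ for $j<m-1$, Hermite interpolation defining $f(\widehat{H}_m)$) is precisely the standard proof of that cited theorem. One minor refinement: the vanishing of the spillover term needs only the Hessenberg structure, not unreducedness; unreducedness is what makes $\widehat{H}_m$ nonderogatory, so that the $m$ confluent interpolation conditions exactly match the minimal polynomial and the degree-$\le m-1$ interpolant $q$ is the natural (unique) one.
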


The interpolation characterization~\eqref{eq:interp} applies to \emph{any} approximation obtained from an Arnoldi-like decomposition $A V_m = \widehat{H}_m  + h_{m+1,m}\widehat{\vv}_{m+1} \ve_m^T$, including the ``cheaper approximants'' suggested in \cite[section~3.2]{cortinovis2022speeding}. This can even be generalized to so-called \emph{Krylov-like decompositions} which drop the requirement that the columns of $V_m$ be linearly independent~\cite{EiermannErnstGuettel2011}. This interpolation characterization is numerically robust, independent of the basis conditioning: for example, it has been used in \cite{AfanasjewEtAl2008b} to analyze the convergence of restarted/truncated  Krylov approximants when the truncation length is small as~$k=1$. Perhaps these insights can be used in future work to analyze sketched Krylov approximations in the case where the basis $V_m$ becomes numerically singular. 

}

\subsection{Error analysis} 
It is interesting to compare \eqref{eq:sfom2}   to~\eqref{eq:arnoldi_approximation}. Clearly,   both formulas  coincide if 
$V_m^H \SHS  = V_m^\dagger$, but we can also state a more general result.

\smallskip

\begin{corollary}\label{cor:err}
Assume that \eqref{eq:sketch} holds with $\varepsilon\in [0,1)$ and that $S V_m$ has orthonormal columns, i.e., \eqref{eq:white} is satisfied. Let $\vf_m$ and $\widehat \vf_m$ denote the FOM and sketched FOM approximants to $f(A)\vb$ defined in \eqref{eq:arnoldi_approximation} and \eqref{eq:sfom2}, respectively. Then
\[
 \|\vf_m - \widehat \vf_m\| \leq \sqrt{\frac{{1+\varepsilon}}{{1-\varepsilon}}}\cdot  \|\vb\| \cdot \| f(V_m^\dagger A V_m) -  f\left(V_m^H\SHS AV_m\right) \|.
\]
\end{corollary}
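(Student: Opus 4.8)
The plan is to collapse the difference $\vf_m - \widehat\vf_m$ onto a single matrix-function discrepancy acting on a \emph{common} coordinate vector, and then to transfer norms between $\spK_{m+1}(A,\vb)$ and its sketch by applying the two halves of \eqref{eq:sketch} in the appropriate directions, together with the isometry furnished by the whitening condition \eqref{eq:white}.

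First I would show that the two ``right-hand coordinate vectors'' in \eqref{eq:arnoldi_approximation} and \eqref{eq:sfom2} actually coincide. Since $\vb\in\spK_m(A,\vb)=\range(V_m)$, the orthogonal projector $V_mV_m^\dagger$ fixes $\vb$, so $V_m(V_m^\dagger\vb)=\vb$. On the other hand, \eqref{eq:white} gives $V_m^H\SHS V_m=I_m$, so $P:=V_mV_m^H\SHS$ satisfies $P^2=P$ and $P\vb=\vb$ for $\vb\in\range(V_m)$; hence $V_m(V_m^H\SHS\vb)=\vb$ as well. Because well-posedness of \eqref{eq:ym_hat} forces $V_m$ to have full column rank $m$, the equation $V_m\vc=\vb$ has a unique solution, and therefore $V_m^\dagger\vb=V_m^H\SHS\vb=:\vc$. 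With this, the two right factors cancel and the error reduces cleanly to
\[
\vf_m-\widehat\vf_m=V_m\big(f(V_m^\dagger AV_m)-f(V_m^H\SHS AV_m)\big)\vc=:V_mM\vc .
\]

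Next I would estimate $\|V_mM\vc\|$ by passing through the sketch. As $V_mM\vc\in\range(V_m)\subseteq\spK_{m+1}(A,\vb)$, the lower inequality in \eqref{eq:sketch} yields $\|V_mM\vc\|\le(1-\varepsilon)^{-1/2}\|SV_mM\vc\|$. The whitening condition \eqref{eq:white} makes $SV_m$ an isometry on $\C^m$, so $\|SV_mM\vc\|=\|M\vc\|\le\|M\|\,\|\vc\|$ in the Euclidean/spectral norms. To control $\|\vc\|$, I use the isometry once more and $SV_m\vc=S\vb$ to get $\|\vc\|=\|SV_m\vc\|=\|S\vb\|$, and since $\vb\in\spK_{m+1}(A,\vb)$ the upper inequality in \eqref{eq:sketch} gives $\|S\vb\|\le\sqrt{1+\varepsilon}\,\|\vb\|$. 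Chaining these three estimates produces exactly $\|\vf_m-\widehat\vf_m\|\le\sqrt{(1+\varepsilon)/(1-\varepsilon)}\,\|\vb\|\,\|M\|$.

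The only genuinely nontrivial point is the first step: recognizing that \eqref{eq:white} forces the two coordinate vectors to agree, so that the error collapses to $V_mM\vc$ with no cross terms and the bound depends only on the matrix-argument discrepancy $\|M\|$. Everything after that is a matter of applying \eqref{eq:sketch} in the correct directions (the lower bound to inflate $\|V_mM\vc\|$ into a sketched norm, the upper bound to tame $\|S\vb\|$) and exploiting that $SV_m$ is an isometry. I would also note the harmless edge case: if a lucky breakdown reduces the Krylov dimension below $m$, then $V_m$ loses full column rank, but in that regime \eqref{eq:ym_hat} is typically exact and the statement degenerates trivially; otherwise the full column rank guaranteed by the well-definedness of \eqref{eq:ym_hat} is precisely what licenses the uniqueness argument.
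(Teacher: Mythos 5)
Your proof is correct and follows essentially the same route as the paper's: both collapse the difference to $V_m\bigl(f(V_m^\dagger AV_m)-f(V_m^H\SHS AV_m)\bigr)\vc$ with a common coordinate vector $\vc = V_m^\dagger\vb = V_m^H\SHS\vb$, bound $\|\vc\|=\|S\vb\|\le\sqrt{1+\varepsilon}\,\|\vb\|$ via the upper sketch inequality, and extract the factor $(1-\varepsilon)^{-1/2}$ from the lower one (the paper through $\|V_m\|\le (1-\varepsilon)^{-1/2}$ and submultiplicativity, you by sketching the single vector $V_mM\vc$ and using the isometry of $SV_m$ --- the same estimate in different packaging). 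Your explicit verification that the two coordinate vectors coincide makes precise a step the paper leaves implicit, and your appeal to well-posedness of \eqref{eq:ym_hat} for full column rank of $V_m$ is not even needed, since \eqref{eq:white} alone forces it ($V_m\vw=\vnull$ implies $SV_m\vw=\vnull$, hence $\vw=\vnull$).
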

\begin{proof}
First note that $S\vb$ is a column in the sketched Krylov matrix $S V_m$, the latter of which is assumed to be orthonormal. Hence, $\| (S V_m)^H S\vb \| = \|S\vb\| \leq \sqrt{1+\varepsilon} \|\vb\|$ by  \eqref{eq:sketch}. Also by \eqref{eq:sketch} we have 
\[
    \|V_m\|  = \max_{\|\vw\|=1} \|V_m \vw\| \leq  \max_{\|\vw\|=1} \frac{1}{\sqrt{1-\varepsilon}} \|SV_m \vw\| = \frac{1}{\sqrt{1-\varepsilon}} \| SV_m \| = \frac{1}{\sqrt{1-\varepsilon}}.
\]
The claimed inequality follows from 
\[
 \|\vf_m - \widehat \vf_m\|  \leq \| V_m \| \cdot \| f(V_m^\dagger A V_m) -  f\left(V_m^H\SHS AV_m\right) \| \cdot \| (S V_m)^H S\vb \|.
\]
\hfill\end{proof}

\smallskip

\begin{remark}\label{rem:fom}
We stress again that the sketched FOM approximants \eqref{eq:sfom} and \eqref{eq:sfom1} with an arbitrary Krylov basis $V_m$ will yield exactly the same errors $ \|\vf_m - \widehat \vf_m\|$  as the approximants \eqref{eq:sfom2} assuming orthonormal $SV_m$, but only in the later case we obtain a simple error formula as in Corollary~\ref{cor:err}.   

The corollary offers a general avenue for a thorough analysis of the distance between the full FOM and the sketched FOM approximation depending on the sketching matrix $S$ and the function $f$. However, without some restrictive assumptions on $S$ and $f$, the factor $\| f(V_m^\dagger A V_m) -  f\left(V_m^H\SHS AV_m\right) \|$ will likely be difficult to bound: while it is clear that the Rayleigh quotient $V_m^\dagger A V_m$ has eigenvalues contained in the numerical range $W(A):=\{ \vx^H A \vx : \|\vx\|=1\}$, the eigenvalues of $V_m^H\SHS AV_m$, \rev{which by Corollary~\ref{cor:alice} are the nodes of an interpolating polynomial for $f$,} are not restricted to such a canonical set. In light of~\eqref{eq:sketch_innerproduct}, the only inclusion that we can give without further assumptions is
\begin{equation}\label{eq:inclusion_sketched_ritz}
\Lambda(V_m^H\SHS AV_m) \subset W(A) + \Delta(0,\varepsilon \|A\|) = \{z_1 + z_2 : z_1 \in W(A), |z_2| \leq \varepsilon \|A\| \}.
\end{equation}

Hence, even if $A$ is Hermitian, there is no guarantee that  $\Lambda(V_m^H\SHS AV_m)$ be real; or if $W(A)$ is contained in the right complex half-plane, then $V_m^H\SHS AV_m$ may still have eigenvalues with negative real part. This may lead to potential instabilities when evaluating the sketched FOM approximant. Indeed, we observe in numerical experiments reported in section~\ref{sec:exp} that sketched FOM can exhibit non-smooth convergence behavior on some problems. \rev{Similar observations have been reported for FOM (and  analyzed for symmetric positive definite~$A$) in the recent technical report \cite{timsit2023randomized}.}
\end{remark}

\section{Sketched GMRES approximation}\label{sec:sgmres}
\rev{Sketched GMRES methods for the solution of (parameterized) linear systems have been considered, e.g., in~\cite{balabanov2021randomized,balabanov2021randomizedblock,nakatsukasa2021fast,balabanov2022randomized}. In our setting with shifted linear systems}  $(tI + A) \vx(t)=\vb$, we simply impose that the  residual  \begin{equation}
    \widetilde \vr_m(t) := \vb - (t I + A) \widetilde \vx_m(t) \label{eq:res}
\end{equation}    
of  $\widetilde \vx_m(t) := V_m \widetilde \vy_m(t)$ be minimal after sketching:
\[
\|S\widetilde \vr_m(t) \| = \|    S \vb - S (tI + A) V_m \widetilde \vy_m(t) \|\to \min .
\]
The solution is
\[
\widetilde \vy_m(t) = (tSV_m + SAV_m)^\dagger (S\vb),
\]
leading to the \emph{sketched GMRES approximant to $f(A)\vb$} defined as 
\begin{align}
\widetilde \vf_m &=  \int_\Gamma \widetilde \vx_m(t) \d\mu(t)
=V_m \int_\Gamma (tSV_m + SAV_m)^\dagger \d\mu(t)  (S\vb).
\tag{sGMRES}\label{eq:sgmres}
\end{align}

We will see in numerical experiments reported in section~\ref{sec:exp} that the sketched GMRES method can exhibit a smoother convergence behavior than sketched FOM. On the other hand, there appears to be no simple closed form for the sketched GMRES approximant and quadrature is necessary for its evaluation.

\smallskip

\begin{remark}\label{rem:sgmres}
The approximant~\eqref{eq:sgmres} does \emph{not} necessarily coincide with the harmonic Arnoldi approximant introduced in \cite[Section~6]{FrommerGuettelSchweitzer2014a} even when $S=I$. In the harmonic Arnoldi approach the shifted linear system with $t=0$ is solved by GMRES, but all other systems with $t\in\Gamma$ are solved such that their residual vectors are collinear to that of the $t=0$ problem. There is no reason why the residuals $\widetilde \vr_m(t)$ defined in \eqref{eq:res} would necessarily be collinear for different values of~$t$. \revv{As a consequence, it also appears to be more challenging to interpret sGMRES as a simple polynomial interpolation process as we have done for sFOM in Corollary~\ref{cor:alice}.}
\end{remark}

\smallskip

Let us compare the  residual $\widetilde \vr_m(t)$ of the sketched solution $\widetilde \vx_m(t)$ to the residual $\vr_m(t)$ of the full GMRES solution 
$\vx_m(t) = V_m (tV_m + AV_m)^\dagger \vb$. We have
\[
        \|  \vr_m(t) \| \leq \|\widetilde \vr_m(t)\| \leq \frac{1}{\sqrt{1-\varepsilon}}  \|S \widetilde \vr_m(t) \| \leq  \frac{1}{\sqrt{1-\varepsilon}}  \|S \vr_m(t) \|\leq \sqrt{\frac{1+\varepsilon}{1-\varepsilon}} \| \vr_m(t) \|.
\]
For the second and fourth inequalities we have used \eqref{eq:sketch} and this is indeed valid because $\widetilde \vr_m(t), \vr_m(t)\in\mathcal{K}_{m+1}(A,\vb)$. In the first and third inequality we have used the fact that $\vr_m(t)$ and $S\widetilde \vr_m(t)$ have smallest possible norm as per definition, respectively. 
Crucially,
\begin{equation}
\|\widetilde \vr_m(t)\| \leq C_\varepsilon \| \vr_m(t) \|, \quad C_\varepsilon := \sqrt{\frac{1+\varepsilon}{1-\varepsilon}} \quad \text{for all \ $t\in\Gamma$}.
\label{eq:resbnd}
\end{equation}

\subsection{Convergence for Stieltjes functions of positive real matrices}
In this section, let us assume that $A$ is a positive real matrix, i.e.,  $\real(\vv^H A \vv) > 0$ for all $\vv \in \mathbb{C}^N,\ \vv\neq 0$. Further, assume that $f$ is a Stieltjes function with $\Gamma=[0,+\infty)$. 
Building on the analysis in \cite{FrommerGuettelSchweitzer2014b}, the  quantities
\begin{align*}
   \delta &:= \lambda_{\min}\left(\frac{A+ A^{H}}{2}\right) = \min \big\{ \real\big(\vv^H A\vv\big): \|\vv\|=1\big\}, \\
   \rho &:= \lambda_{\min}\left(\frac{A^{-1}+ A^{-H}}{2}\right) = \min \big\{ \real\big(\vv^H A^{-1}\vv\big): \|\vv\|=1\big\},
\end{align*}
will be useful. Since with $A$ the matrices $A^{-1}$ and $\AHA$ are also positive real, the numbers  $\delta$ \rev{and} $\rho$ are positive. 

For a Hermitian matrix $M$, let us define the $M$-energy norm as $\|\vv\|_M := \sqrt{ \vv^H M \vv}$, and denote by $\widetilde \ve_m(t):=(tI+A)^{-1}\vb - \widetilde \vx_m(t)$ and $\ve_m(t):=(tI+A)^{-1}\vb - \vx_m(t)$ the error of the sketched GMRES and full GMRES approximants, respectively. Then we can equivalently write the residual inequality~\eqref{eq:resbnd} in terms of errors as 
\begin{equation}
    \|\widetilde \ve_m(t)\|_{(A+tI)^H(A+t I)} \leq C_\varepsilon   \| \ve_m(t)\|_{(A+tI)^H(A+t I)}.
\label{eq:resbnd_err}
\end{equation}
The following lemma from \cite[Lemma~6.4]{FrommerGuettelSchweitzer2014b} is included for convenience. 

\smallskip 

\begin{lemma} \label{lem:AHA_norm_estimates}
Let $A\in\mathbb{C}^{N\times N}$ be positive real. 
\begin{itemize}
  \item[(i)] For all $\vv \in \CN$ and $t \geq 0$ we have 
 \[ 
\| \vv \|_{\AHA}^2 \leq  \frac{1}{\rev{\|A\|^{-2}} t^2 + 2\rho t +1} \| \vv \|_{(A+tI)^H(A+tI)}^2\, .
\]
\item[(ii)] For $t \geq 0$ we have
\[
\frac{1}{\rev{\|A\|^{-2}} t^2 + 2  \rho t +1} \leq \frac{\rev{\|A\|}}{(t+\rho\rev{\|A\|^2})^2}\,.
\]
\end{itemize}
\end{lemma}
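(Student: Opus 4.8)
The plan is to regard part~(i) as the real content and part~(ii) as an elementary scalar estimate built on top of it. For part~(i) I would first make both energy norms concrete, writing $\|\vv\|_{\AHA}^2 = \|A\vv\|^2$ and $\|\vv\|_{(A+tI)^H(A+tI)}^2 = \|(A+tI)\vv\|^2$. Because $A$ is positive real it is in particular invertible, so I may substitute $\vw = A\vv$, i.e.\ $\vv = A^{-1}\vw$; since $(A+tI)\vv = \vw + tA^{-1}\vw$, the inequality to be shown becomes $(\|A\|^{-2}t^2 + 2\rho t + 1)\|\vw\|^2 \le \|(I+tA^{-1})\vw\|^2$, a statement about $A^{-1}$ alone.

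To prove this reduced inequality I would expand
\[
\|(I+tA^{-1})\vw\|^2 = \|\vw\|^2 + 2t\,\real(\vw^H A^{-1}\vw) + t^2\|A^{-1}\vw\|^2
\]
and bound the $t$- and $t^2$-terms separately. The linear term is handled by the very definition of $\rho$: since $\real(\vw^H A^{-1}\vw) = \vw^H\tfrac{A^{-1}+A^{-H}}{2}\vw$, we get $\real(\vw^H A^{-1}\vw) \ge \rho\|\vw\|^2$. For the quadratic term I would use $\|A^{-1}\vw\| \ge \sigma_{\min}(A^{-1})\|\vw\| = \|A\|^{-1}\|\vw\|$, because the smallest singular value of $A^{-1}$ equals $1/\sigma_{\max}(A) = \|A\|^{-1}$. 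Inserting both estimates and collecting powers of $t$ produces exactly the quadratic $\|A\|^{-2}t^2 + 2\rho t + 1$, which is part~(i).

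For part~(ii) I would clear the (strictly positive) denominators and complete the square in the denominator appearing in~(i),
\[
\|A\|^{-2}t^2 + 2\rho t + 1 = \|A\|^{-2}\Big[(t+\rho\|A\|^2)^2 + \|A\|^2\big(1-\rho^2\|A\|^2\big)\Big].
\]
The correction term is nonnegative precisely when $\rho\|A\|\le 1$; granting this, it can be dropped to leave $\|A\|^{-2}(t+\rho\|A\|^2)^2$ as a lower bound for the denominator, and taking reciprocals gives the claimed bound. To establish $\rho\|A\|\le 1$ I would test the field-of-values characterization of $\rho$ on the leading left singular vector $\vu_1$ of $A$: writing $A\vv_1 = \|A\|\vu_1$ for the corresponding right singular vector gives $A^{-1}\vu_1 = \vv_1/\|A\|$, hence $\rho \le \real(\vu_1^H A^{-1}\vu_1) = \real(\vu_1^H\vv_1)/\|A\| \le 1/\|A\|$.

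The expansions and the collection of terms are routine; positive-realness is invoked only to guarantee that $A$ is invertible (so the substitution in~(i) is legitimate) and that $\rho>0$ (so the denominators in~(ii) are positive). The one point I would treat carefully is the scalar inequality $\rho\|A\|\le 1$ together with the exact power of $\|A\|$ surviving in the numerator of~(ii): completing the square naturally yields $\|A\|^2$ there, which reduces to the stated factor in the normalized setting $\|A\|=1$ of the underlying reference (and bounds it otherwise). This is the only place where the estimate is genuinely tight rather than a mechanical manipulation, and it is where I would expect to have to think.
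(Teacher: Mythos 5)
Your proof of part (i) is correct, and it is essentially the standard argument from the cited source \cite{FrommerGuettelSchweitzer2014b} (note the paper itself states the lemma without proof, quoting it ``for convenience''): substituting $\vw = A\vv$, bounding the cross term via the definition of $\rho$ and the quadratic term via $\sigma_{\min}(A^{-1}) = \|A\|^{-1}$ is exactly the intended route, and your singular-vector argument for $\rho\|A\|\leq 1$ is also correct and is genuinely needed.

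For part (ii), however, your own computation proves more than you concede, and your hedge at the end is backwards. Completing the square as you do gives
\[
\|A\|^{-2}t^2 + 2\rho t + 1 \;\geq\; \|A\|^{-2}\big(t+\rho\|A\|^2\big)^2,
\qquad\text{i.e.}\qquad
\frac{1}{\|A\|^{-2}t^2+2\rho t+1} \;\leq\; \frac{\|A\|^{2}}{\big(t+\rho\|A\|^2\big)^2},
\]
with $\|A\|^2$, not $\|A\|$, in the numerator. Your parenthetical ``(and bounds it otherwise)'' is wrong in direction: for $\|A\|>1$ the $\|A\|^2$ bound is \emph{weaker} than the printed one, so it cannot imply the statement---and in fact the printed statement is false there. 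Take $A=2I$, so $\|A\|=2$ and $\rho=1/2$; at $t=0$ the printed inequality reads $1 \leq \|A\|/(\rho\|A\|^2)^2 = 1/2$, which fails. The factor $\|A\|$ in the printed lemma appears to be a typo introduced when the norm factors were inserted in revision (the original reference works in a normalized setting): the version with $\|A\|^2$ is the correct general statement, and it is also the one actually used in the proof of Theorem~\ref{thm:conv}, where taking the square root of (ii) gives $1/\sqrt{\|A\|^{-2}t^2+2\rho t+1} \leq \|A\|/(t+\rho\|A\|^2)$ and hence precisely the constant $C_1 = \|A\| f(\rho\|A\|^2)$. So your mathematics is sound and effectively detects an error in the statement; the only flaw in your write-up is claiming your bound recovers the stated one away from $\|A\|=1$. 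You should instead assert the corrected inequality outright---your proof of $\rho\|A\|\leq 1$ makes it rigorous for every positive real $A$.
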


We are now in the position to state our main theorem on the convergence of the sketched GMRES approximation. The proof will be different to that for the harmonic Arnoldi approximation presented in \cite[Theorem~6.5]{FrommerGuettelSchweitzer2014b} as we do not have collinearity of residuals for different values of $t\geq 0$; cf.~Remark~\ref{rem:sgmres}. 

\smallskip

\begin{theorem}\label{thm:conv}
Let $A$ be a positive real matrix and $f$ a Stieltjes function. Assume that the condition \eqref{eq:sketch} holds with $\varepsilon\in [0,1)$.  Let $\widetilde\vf_m$ be the sketched GMRES approximant to $f(A)\vb$ defined by \eqref{eq:sgmres}. Let $\beta_0 = \arccos(\delta/\|A\|)\in [0,\pi/2)$. Then
\[
\|f(A)\vb - \widetilde \vf_m \|_{\AHA} 
\leq 
C_1 C_\epsilon \|\vb\| (\sin(\beta_0))^m,
\]
with constants $C_1 = \rev{\|A\|}f(\rho \rev{\|A\|^2})$ and $C_\varepsilon = \sqrt{(1+\varepsilon)/(1-\varepsilon)}$.
\end{theorem}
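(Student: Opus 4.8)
The plan is to bound the total error $\|f(A)\vb - \widetilde\vf_m\|_{\AHA}$ by pushing the integral representation through the $\AHA$-norm and reducing everything to the per-shift error bound $\widetilde\ve_m(t)$ controlled by~\eqref{eq:resbnd_err}. First I would write
\[
f(A)\vb - \widetilde\vf_m = \int_0^\infty \big[(tI+A)^{-1}\vb - \widetilde\vx_m(t)\big]\,\d\mu(t) = \int_0^\infty \widetilde\ve_m(t)\,\d\mu(t),
\]
so that by the triangle inequality (integral Minkowski form) in the $\AHA$-norm,
\[
\|f(A)\vb - \widetilde\vf_m\|_{\AHA} \leq \int_0^\infty \|\widetilde\ve_m(t)\|_{\AHA}\,\d\mu(t).
\]
This is legitimate because $\mu$ is a nonnegative monotone measure on $\Gamma=[0,\infty)$. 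The task then splits into bounding the single-shift quantity $\|\widetilde\ve_m(t)\|_{\AHA}$ and integrating the resulting $t$-dependent bound against $\d\mu(t)$.

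Second, for each fixed $t\geq 0$ I would chain the available estimates. Lemma~\ref{lem:AHA_norm_estimates}(i) converts the $\AHA$-norm of the sketched error into the residual-type $(A+tI)^H(A+tI)$-norm, picking up the scalar $1/(\|A\|^{-2}t^2 + 2\rho t + 1)$; then \eqref{eq:resbnd_err} trades the sketched GMRES error for $C_\varepsilon$ times the full GMRES error in the same norm. At this point I need a convergence bound for the full GMRES error $\|\ve_m(t)\|_{(A+tI)^H(A+tI)}$ — this is the classical field-of-values/Elman-type estimate for GMRES applied to the positive real matrix $tI+A$. Since the field of values of $tI+A$ is the shift of $W(A)$ by $t$, and the relevant angle only shrinks as $t$ grows, the worst case is $t=0$, giving a factor $(\sin\beta_0)^m$ with $\beta_0=\arccos(\delta/\|A\|)$, together with a residual-norm factor $\|\vr_0(t)\| = \|(tI+A)^{-1}\vb\|_{(A+tI)^H(A+tI)}$-type starting quantity that reduces to $\|\vb\|$ since the initial GMRES residual for the zero initial guess is simply $\vb$. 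Combining these with Lemma~\ref{lem:AHA_norm_estimates}(ii) to simplify the scalar factor to $\|A\|/(t+\rho\|A\|^2)^2$, I expect the integrand bound
\[
\|\widetilde\ve_m(t)\|_{\AHA} \leq C_\varepsilon (\sin\beta_0)^m \|\vb\| \cdot \frac{\sqrt{\|A\|}}{t+\rho\|A\|^2},
\]
or a comparable expression in which the $t$-dependence is integrable against $\d\mu$.

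Third, I would integrate. The remaining $t$-dependent factor must combine with $\d\mu(t)$ to reproduce the value $f$ of the Stieltjes function at a suitable argument, which is how the constant $C_1 = \|A\|f(\rho\|A\|^2)$ emerges: the Stieltjes integral $\int_0^\infty (t+\rho\|A\|^2)^{-1}\d\mu(t)$ is exactly $f(\rho\|A\|^2)$ by the defining integral representation of Stieltjes functions. Matching the power of $\|A\|$ and the square in the denominator to land precisely on $\|A\|f(\rho\|A\|^2)$ rather than a first-power-denominator expression is the delicate bookkeeping step; the cleanest route is probably to keep Lemma~\ref{lem:AHA_norm_estimates}(ii)'s bound $\|A\|/(t+\rho\|A\|^2)^2 \leq \|A\|/\big((t+\rho\|A\|^2)\cdot \rho\|A\|^2\big)$ and absorb constants, though one should check whether the theorem intends the tighter squared form directly recognized as a Stieltjes integral of a derivative.

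The main obstacle I anticipate is the single-shift GMRES convergence estimate: establishing $\|\ve_m(t)\|_{(A+tI)^H(A+tI)} \leq (\sin\beta_0)^m\|\vb\|$ uniformly in $t\geq 0$, with the correct geometric factor governed by the field of values of $A$ (not $A+tI$) and no $t$-dependence in the rate. This requires the observation that shifting a positive real matrix by a nonnegative $t$ only improves the field-of-values angle, so the uniform rate is attained at $t=0$; I would need the standard polynomial-approximation argument (bounding $\min_p \max_{z\in W(tI+A)}|p(z)|$ over monic-normalized polynomials $p$ of degree $m$) carried out carefully so that the $t$-independent bound $(\sin\beta_0)^m$ genuinely holds for every shift. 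Verifying that the relevant starting residual norm is exactly $\|\vb\|$ in every shifted $(A+tI)^H(A+tI)$-norm — rather than a $t$-inflated quantity — is the other point where I would slow down, since the norm itself depends on $t$; here Lemma~\ref{lem:AHA_norm_estimates}(i) is precisely what decouples the residual norm from the $\AHA$-norm and keeps the starting factor at $\|\vb\|$.
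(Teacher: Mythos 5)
Your proposal is correct and follows essentially the same route as the paper's proof: write the error as $\int_0^\infty \widetilde\ve_m(t)\,\d\mu(t)$ and push the $\AHA$-norm inside, chain Lemma~\ref{lem:AHA_norm_estimates}(i) with \eqref{eq:resbnd_err}, bound the full GMRES residual via Elman's estimate with the uniform-in-$t$ rate obtained from $\cos(\beta_t) = (t+\delta)/\|tI+A\| \geq (t+\delta)/(t+\|A\|) \geq \delta/\|A\| = \cos(\beta_0)$, and finish with Lemma~\ref{lem:AHA_norm_estimates}(ii) together with the Stieltjes representation $\int_0^\infty (t+\rho\|A\|^2)^{-1}\d\mu(t) = f(\rho\|A\|^2)$ to produce $C_1$. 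The two points you flag as delicate are resolved exactly as you anticipate: the identity $\|\ve_m(t)\|_{(A+tI)^H(A+tI)} = \|\vr_m(t)\|$ reduces everything to the Euclidean residual with starting value $\|\vb\|$ (the paper simply cites Elman~\cite{Elman1982} rather than redoing the polynomial argument, and uses $\delta$ precisely because it shifts additively under $A \mapsto tI+A$, unlike $\mathrm{dist}(0,W(A))$), while your bookkeeping worry about the constant is immaterial since taking the square root of Lemma~\ref{lem:AHA_norm_estimates}(ii) directly yields $\sqrt{\|A\|}/(t+\rho\|A\|^2)$ and hence a bound consistent with the stated $C_1$.
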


\smallskip

\begin{proof}
We have
\[
\|f(A)\vb - \widetilde \vf_m \|_{\AHA} 
=
\left\| \int_0^{\infty} \widetilde \ve_m(t) \d \mu(t) \right\|_{\AHA} 
\leq 
\int_0^{\infty}  \frac{ C_\varepsilon \| \ve_m(t) \|_{(A+tI)^H(A+tI)} }{\sqrt{\rev{\|A\|^{-2}} t^2 + 2  \rho t +1}}   \d \mu(t) 
\]
\[
 = \int_0^{\infty} \frac{ C_\varepsilon \| \vr_m(t) \| }{\sqrt{\rev{\|A\|^{-2}} t^2 + 2  \rho t +1}}   \d \mu(t),
\]
where we have used Lemma~\ref{lem:AHA_norm_estimates}(i) together with~\eqref{eq:resbnd_err} for the first inequality. 
It remains to bound $\|\vr_m(t)\|$, the residual of the standard GMRES method for the system $(tI+A)\vx(t) =\vb$. 
Using a convergence result in \cite{Elman1982} (see also \cite{beckermann2005some} for an improved version), we have the bound 
\[
    \|\vr_m(t)\| \leq  \|\vb\| (\sin(\beta_t))^m, 
\]
\[
    \cos(\beta_t) = \frac{\lambda_{\min}([(tI+A)+(tI+A)^H]/2)}{\|tI + A\|}
    =
    \frac{t + \delta}{\|tI + A\|}
    <1.
\]
Since for all $t\geq 0$, 
\[
\cos(\beta_t) 
\geq 
\frac{t + \delta}{t + \|A\|}
\geq
\frac{\delta}{\|A\|}
= \cos(\beta_0),
\]
we also have $\beta_t \leq \beta_0$ and hence
\[
 \|\vr_m(t)\| \leq  \|\vb\| (\sin(\beta_0))^m \quad \text{for all} \ t\geq 0.
\]
Therefore,
\[
\|f(A)\vb - \widetilde \vf_m \|_{\AHA} 
\leq
 \int_0^{\infty} \frac{  C_\varepsilon  \|\vb\| (\sin(\beta_0))^m }{\sqrt{\rev{\|A\|^{-2}} t^2 + 2  \rho t +1}}   \d \mu(t)  
 \leq
 C_1 C_\varepsilon  \|\vb\| (\sin(\beta_0))^m,
\]
where $C_1= \rev{\|A\|} f(\rho \rev{\|A\|^2})$  by Lemma~\ref{lem:AHA_norm_estimates}(ii).
\hfill\end{proof}

\smallskip

While the convergence factor $\sin(\beta_0)$ in Theorem~\ref{thm:conv} can often be improved, results like this can generally not be  expected to give particularly tight error bounds. This is not a problem of our derivation but  common to all a-priori convergence bounds on GMRES. Nevertheless, Theorem~\ref{thm:conv} guarantees convergence of the sketched GMRES approximant \eqref{eq:sgmres} for Stieltjes functions of positive real matrices.

One might wonder why we have used $\delta = \lambda_{\min}((A+A^H)/2)$ in place of the distance of the origin to the numerical range,  $\mathrm{dist}(0,W(A))$, as sharper convergence factors could be obtained with the later; see \cite{beckermann2005some}. This is because the former expression increases exactly by $t$ if $A$ is replaced by $t I+A$, while the latter only satisfies
\[
\mathrm{dist}(0,W(tI + A)) \leq t + \mathrm{dist}(0,W(A)),
\]
an inequality in the wrong direction to be of use in the proof of  Theorem~\ref{thm:conv}.

\section{Implementation details}\label{sec:impl}
In this section we discuss a number of topics concerning the implementation of the sketched FOM and sketched GMRES methods. To support this discussion, we summarize the quadrature-based  sketched GMRES method in Algorithm~\ref{alg:sketched_gmres} (including the truncated modified Gram--Schmidt process).

\begin{algorithm}[t]
\caption{\label{alg:sketched_gmres}Sketched GMRES approximation of  $f(A)\vb$ with $k$-truncated Arnoldi}
\begin{algorithmic}[1]
\setstretch{1.2}
\smallskip

\Statex \textbf{Input:}\ \  $A\in\mathbb{C}^{N\times N}$, $\vb\in\mathbb{C}^N$, function $f$, integers $m, s, \ell_1, \ell_2$, tolerance $\texttt{tol}$
\Statex \textbf{Output:} $\widetilde{\vf}_m \approx f(A)\vb$
\State Draw sketching matrix $S \in \C^{s \times N}$
\State $\vv_1 \leftarrow (1/\|\vb\|_2) \cdot \vb$
\State $\vw \leftarrow A\vv_1$
\State Compute sketches $S\vv_1$ and $S\vw$ \Comment{start construction of $SV_m$, $SAV_m$}

\smallskip

\For{$j = 1,\dots,m$}
	\For{$i = \max\{1,j-k+1\},\dots,j$} \Comment{truncated MGS orthogonalization}
        \State $\vw \leftarrow \vw - \langle\vv_i,\vw\rangle\vv_i$
	\EndFor
	\State $\vv_{j+1} \leftarrow (1/\|\vw\|_2) \cdot \vw$
	\State $\vw \leftarrow A\vv_{j+1}$
    \State Compute sketches $S\vv_{j+1}$ and $S\vw$ and append them to $SV_{j+1}$ and $SAV_{j+1}$
\EndFor

\smallskip

\State Compute thin QR decomposition $SV_m= Q_m R_m$ \Comment{basis whitening}
\State  $SV_m \leftarrow Q_m, \quad SAV_m  \leftarrow (SAV_m) R_m^{-1}, \quad V_m  \leftarrow V_m R_m^{-1} \ \text{\rev{(only implicitly!)}}$ 

\smallskip

\If{contour $\Gamma$ is not fixed} \Comment{can be skipped for Stieltjes functions}
\State Compute solutions $\Lambda$ of generalized rectangular EVP $SAV_m\vx = -\lambda SV_m\vx$
\State Choose $\Gamma$ such that it encircles $\Lambda$
\EndIf

\smallskip

\State Compute quadrature rules $\vq_{\ell_1}(S, A, V_m, \vb)$ and $\vq_{\ell_2}(S, A, V_m, \vb)$ \Comment{see~\eqref{eq:quadrature_rule}}

\While{$\| \vq_{\ell_1}(S, A, V_m, \vb) - \vq_{\ell_2}(S, A, V_m, \vb) \| > \texttt{tol}$} 
\State Set $\vq_{\ell_1}(S, A, V_m, \vb) \leftarrow \vq_{\ell_2}(S, A, V_m, \vb)$ \Comment{reuse previous result}
\State Set $\ell_1 \leftarrow \ell_2$, $\ell_2 \leftarrow \lfloor\sqrt{2}\cdot\ell_2\rfloor$ \Comment{increase order of quadrature rules}
\State Compute quadrature rule $\vq_{\ell_2}(S, A, V_m, \vb)$
\EndWhile

\smallskip

\State $\widetilde{\vf}_m \leftarrow V_m\vq_{\ell_2}(S, A, V_m, \vb)$
\end{algorithmic}
\end{algorithm}

\subsection{Adaptive quadrature}\label{subsec:quad}

In order to evaluate the sketched GMRES approximant~\eqref{eq:sgmres}, the occurring integral needs to be approximated as no closed form is available (in contrast to the situation for the sketched FOM approximant). One can in principle use any $\ell$-point quadrature rule
\begin{equation}\label{eq:quadrature_rule}
\int_\Gamma (tSV_m + SAV_m)^\dagger(S\vb) \d\mu(t) \approx \sum_{i=1}^\ell w_i (t_i SV_m + SAV_m)^\dagger(S\vb) =: \vq_\ell(S, A, V_m, \vb),
\end{equation}
with weights $w_i$ and quadrature nodes $t_i \in \Gamma$ ($i=1,2,\dots,\ell$). Following the implementation of the  quadrature-based  restarted Arnoldi method in~\cite{FrommerGuettelSchweitzer2014a}, we propose to use a rather simple form of numerical quadrature: we start by computing the result of two quadrature rules $\vq_{\ell_1}(S, A, V_m, \vb)$ and $\vq_{\ell_2}(S, A, V_m, \vb)$ of orders $\ell_1 < \ell_2$, respectively. If
\begin{equation}\label{eq:quadrature_tol}
\| \vq_{\ell_1}(S, A, V_m, \vb) - \vq_{\ell_2}(S, A, V_m, \vb) \| < \texttt{tol}
\end{equation}
for some user-specified tolerance $\texttt{tol}$, we accept the result of the higher-order quadrature rule $\vq_{\ell_2}$ and use it to approximate $\widetilde{\vf}_m \approx V_m \vq_{\ell_2}(S, A, V_m, \vb)$. Should~\eqref{eq:quadrature_tol} not be satisfied, we increase the order of both quadrature rules by setting $\ell_1 \leftarrow \ell_2$ and $\ell_2 \leftarrow \lfloor\sqrt{2}\cdot\ell_2\rfloor$. This way, the result of the previous computation can be reused for the updated $\vq_{\ell_1}(S, A, V_m, \vb)$, while only $ \vq_{\ell_2}(S, A, V_m, \vb)$ needs to be computed anew. This process is repeated until~\eqref{eq:quadrature_tol} is fulfilled. We emphasize that all computations related to the adaptive quadrature rule are done on small matrices of size $s \times m$, while quantities of size~$N$ are only formed once the quadrature is  sufficiently accurate.

A suitable choice of a specific quadrature rule should depend on $f$ and $\Gamma$. We refer the reader to~\cite[Section~4]{FrommerGuettelSchweitzer2014a} for a discussion of quadrature rules tailored specifically to the important functions $\exp(A)$, $A^{-\alpha}$, and $A^{-1}\log(I+A)$, the latter two of which are Stieltjes functions. When $f$ is not a Stieltjes function, one additionally needs to construct a suitable contour $\Gamma$ before numerically integrating~\eqref{eq:sgmres}. 

\subsection{Two-pass $k$-truncated Arnoldi computation}\label{subsec:two_pass}

For Hermitian $A$, the two-pass Lanczos method~\cite{Borici2000,FrommerSimoncini2008} is a simple approach for employing the Lanczos method in a limited memory environment by running the iteration twice. The sketched FOM and sketched GMRES methods allow us to use a similar approach in the nonsymmetric case: we employ an Arnoldi method with truncation length~$k$ for computing the Krylov basis $V_m$. Whenever we compute a new basis vector $\vv_j$, we compute the sketches $S\vv_j$ and $SA\vv_j$, thereby assembling the matrices $SV_m$ and $SAV_m$ column-by-column. As soon as a basis vector is not needed any longer for performing the truncated orthogonalization, we discard it from memory. At the end of this first pass of the method, we approximate the coefficient vector
\[
\widetilde{\vy}_m = \int_\Gamma (tSV_m + SAV_m)^\dagger \d\mu(t)  (S\vb)
\]
by adaptive quadrature as outlined in section~\ref{subsec:quad}. In the second pass, the sketched GMRES approximation is computed as
$$
\widetilde{\vf}_m = \sum\limits_{i=1}^m [\widetilde{\vy}_m]_i \vv_i,
$$
which can be updated from one iteration to the next. Thus, we can again discard old basis vectors. Of course, this approach doubles the number of matrix-vector products that need to be performed, but it often converges in much fewer iterations than a restarted Arnoldi approach, which amortizes the additional work.

\subsection{Stopping criterion}\label{subsec:stopping_criterion}
As in any iterative method, it is important to have an estimate for the approximation error available in order to determine whether the computed quantity $\widetilde{\vf}_m$ is an accurate enough approximation of the desired quantity $f(A)\vb$ (or to be able to stop the iteration early, if fewer iterations than initially expected are required for reaching the desired accuracy). A-priori error bounds as given in Theorem~\ref{thm:conv} are not well-suited for this purpose, as they tend to overestimate the actual error norm by a large margin and also involve quantities that are usually difficult to access; see also the brief discussion at the end of section~\ref{sec:sgmres}.

A simple error estimate that is often used in Krylov  methods is the difference of two iterates, i.e.,
\[
\big\|f(A)\vb-\widetilde{\vf}_m\big\| \approx \big\|\widetilde{\vf}_{m+d}-\widetilde{\vf}_m\big\|
\]
for a small integer $d \geq 1$. In the context of sketched GMRES, it is important to be able to evaluate a stopping criterion without access to the full matrix $V_m$ (e.g., when it is kept in slow memory or when a two-pass approach is employed). Thus, it must be avoided to explicitly form $\widetilde{\vf}_{m+d}$ and $\widetilde{\vf}_{m}$. To do so, \rev{we exploit that $\widetilde{\vf}_m, \widetilde{\vf}_{m+d} \in \spK_{m+d}(A,\vb)$ and $S$ is an $\varepsilon$-subspace embedding for this space. Therefore, by~\eqref{eq:sketch},
\begin{equation}\label{eq:sketched_err_est1}
\revv{\frac{1}{\sqrt{1+\varepsilon}} \big\|S(\widetilde{\vf}_{m+d}-\widetilde{\vf}_m)\big\|}  \leq 
\big\|\widetilde{\vf}_{m+d}-\widetilde{\vf}_m\big\| \leq \frac{1}{\sqrt{1-\varepsilon}}\big\|S(\widetilde{\vf}_{m+d}-\widetilde{\vf}_m)\big\|
\end{equation}
Writing $\widetilde{\vf}_{m} = V_m\widetilde{\vy}_m$, we obtain from~\eqref{eq:sketched_err_est1} the relation
\begin{equation}\label{eq:sketched_err_est2}
\big\|\widetilde{\vf}_{m+d}-\widetilde{\vf}_m\big\| \leq \frac{1}{\sqrt{1-\varepsilon}}\big\|SV_{m+d}\widetilde{\vy}_{m+d}-SV_m\widetilde{\vy}_m\big\| = \left\| SV_{m+d}\left(\widetilde{\vy}_{m+d}-\left[\begin{array}{c} \widetilde{\vy}_{m} \\ \vnull_d\end{array}\right]\right)\right\|,
\end{equation}
which can be evaluated without access to the full basis, working just with small-scale vectors and matrices. For estimating the unknown embedding quality $\varepsilon$ one can, e.g., compare $\|S\vv_j\|, j = 1,\dots,m$ to $\|\vv_j\| = 1$ whenever a new basis vector is computed and keep track of these values.}

\section{Numerical tests}\label{sec:exp}

In this section we demonstrate the stability and
efficiency of the proposed sketching approaches on some model problems and problems from relevant applications. All computations were
performed in MATLAB R2022A. Timings are measured on a PC with an AMD Ryzen 7 3700X 8-core CPU with clock rate 3.60GHz and 32 GB RAM. Since a part of MATLAB code is interpreted, MATLAB implementations are not always best suited for comparing running times of algorithms, but they are certainly appropriate to assess stability. Moreover, since all algorithms spend most of their time in sparse matrix-vector multiplications, which are calls to precompiled libraries, larger
differences in running times can be trusted as significant. 
All Krylov bases are generated by a (truncated) modified Gram--Schmidt process without reorthogonalization. In all examples with sketching, we use the basis whitening condition~\eqref{eq:white}. The code used for generating all figures and tables in this section is available at \url{https://github.com/marcelschweitzer/sketched_fAb}.

\subsection{Convection--diffusion example}\label{subsec:convdiff}

In this example, let $A$ be the discretization of a two-dimensional convection-diffusion operator on the unit square with constant convection field pointing in the direction $[1,-1]$ and diffusion coefficient $D = 10^{-3}$, where we discretize the convection term by a first-order upwind scheme, giving
\begin{equation*}
A = \frac{D}{h^2} \cdot \rev{(I \otimes L + L \otimes I)} + \frac{1}{h} \cdot (C\oplus C^T) \in \mathbb{R}^{n^2\times n^2}
\end{equation*}
with $h=1/(n+1)$, $L  = \text{tridiag}(-1, 2, -1) \in \mathbb{R}^{n\times n}$, and $C = \text{tridiag}(-1, 1, 0) \in \mathbb{R}^{n\times n}$. For this experiment, we use $n = 100$. We approximate $A^{-1/2}\vb$, where $\vb$ is a vector of all ones scaled to have norm~$1$. For the sketching matrix   we use a subsampled randomized discrete cosine transform (DCT): $S = PFE$, where  
$E\in\mathbb{R}^{N\times N}$ is a diagonal matrix having diagonal entries $\pm 1$ with equal probability, $F\in\mathbb{R}^{N\times N}$ is a DCT, and $P\in\mathbb{R}^{s\times N}$ selects $s$ rows of $FE$ at random; see also \cite[Sec~8.1.1.]{nakatsukasa2021fast}.  The sketching parameter is fixed at $s = 2m_{\max}$, where $m_{\max} = 200$ is the maximum Krylov dimension we encounter.

Figure~\ref{fig:convdiff} illustrates the results. We compare the sketched FOM and GMRES approximations  to the best approximation obtained by explicitly projecting $f(A)\vb$ onto the Krylov space~$\mathcal{K}_m(A,\vb)$. In the sketched FOM case we test both the integral representation \eqref{eq:sfom} evaluated via quadrature as well as the closed formula~\eqref{eq:sfom2}. For sketched GMRES only an integral representation is available and we again use quadrature for its evaluation.

\begin{figure}
    \centering
    \begin{minipage}{0.49\textwidth}
    \hspace*{-3mm}\includegraphics[width=1.1\textwidth]{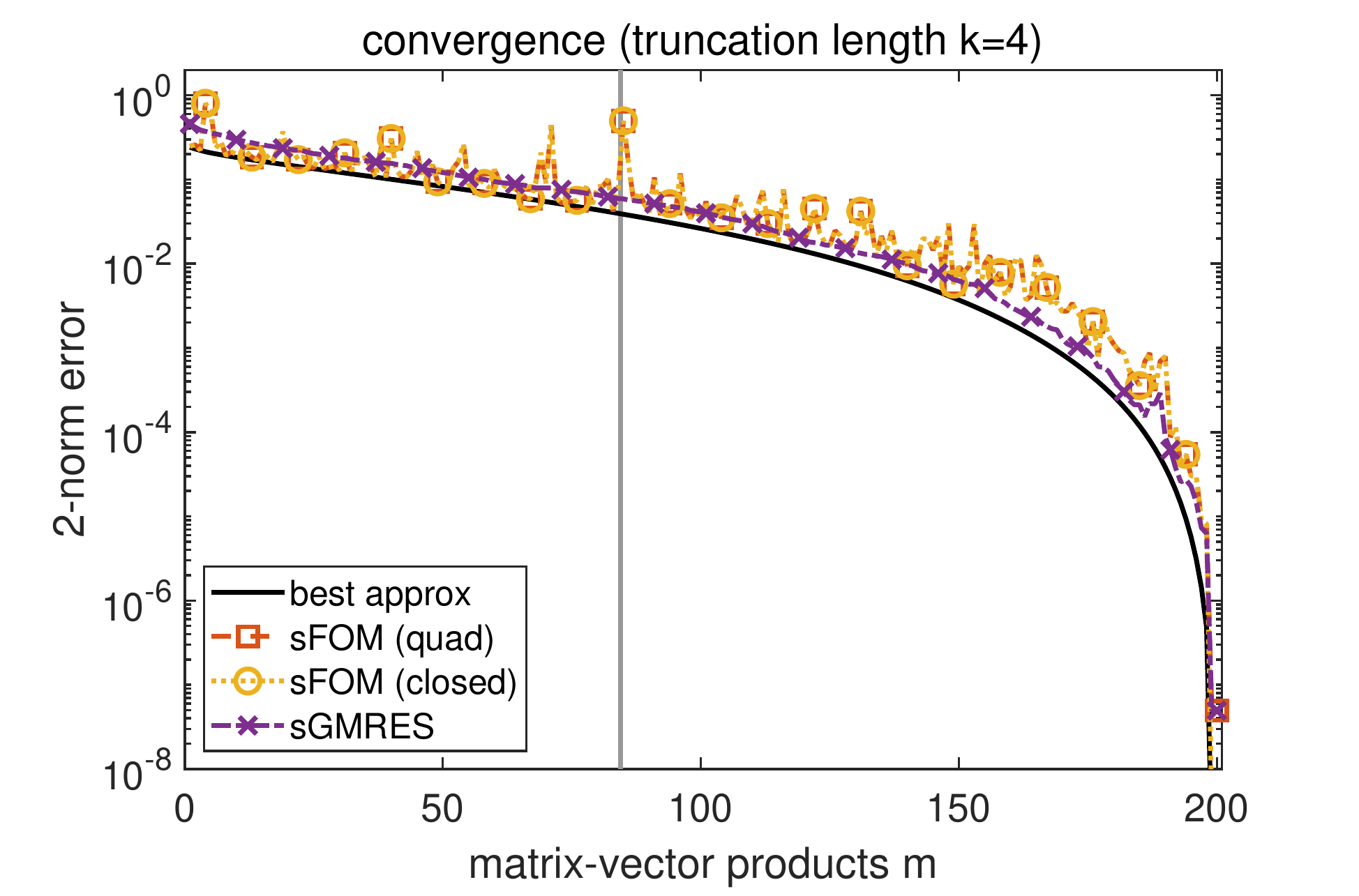}
    \end{minipage}
    \begin{minipage}{0.49\textwidth}
    \hspace*{-0mm}\includegraphics[width=1.1\textwidth]{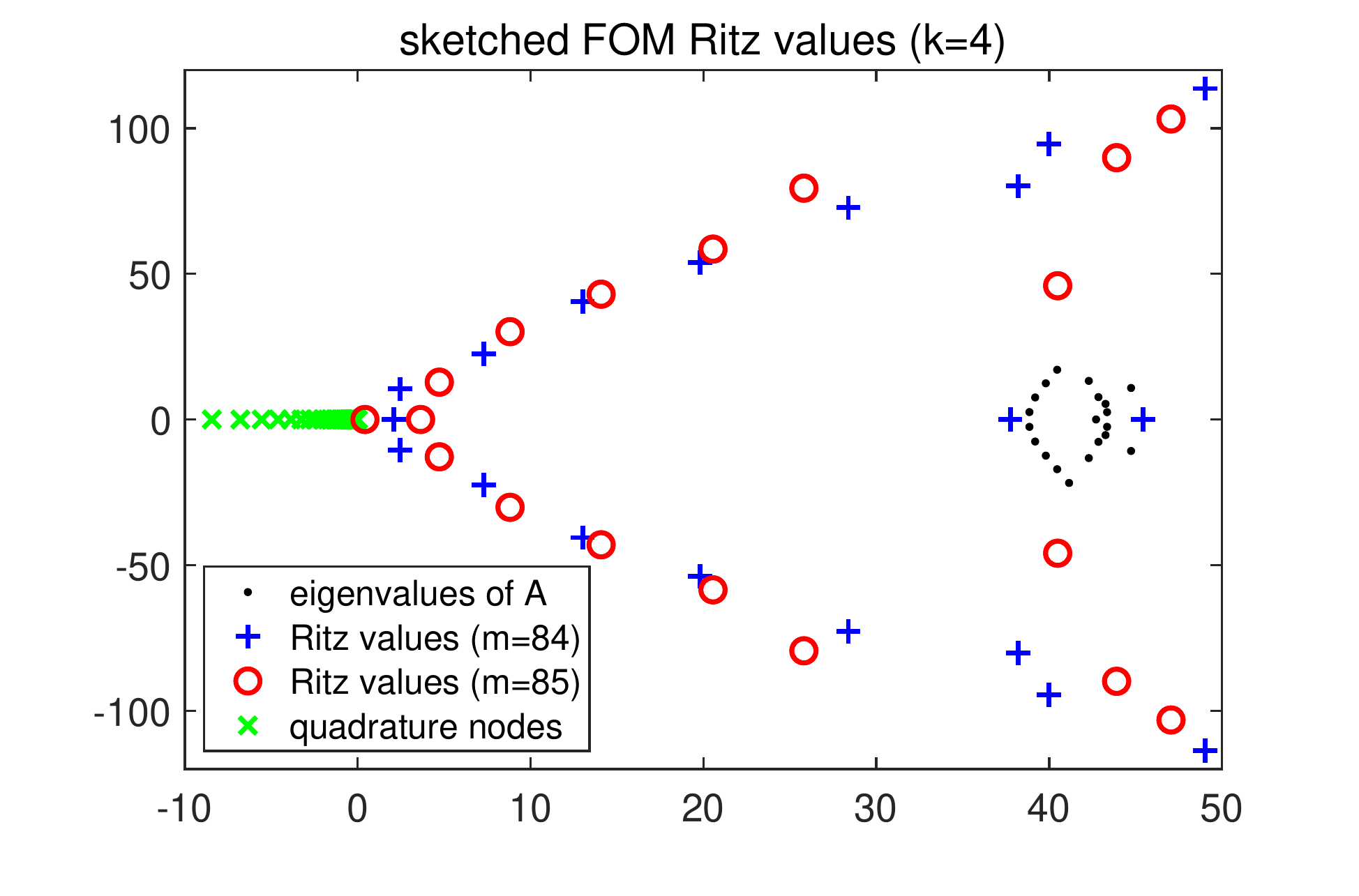}
    \end{minipage}
    \caption{Convection--diffusion example. The left plot shows the convergence of the sketched methods based on truncated Arnoldi with truncation parameter $k=4$. The error of best approximation to $f(A)\vb$ from the Krylov space $\mathcal{K}_m(A,\vb)$ is also shown. On the right we show some of the Ritz values $\Lambda(V_m^H S^H SAV_m)$ for the orders $m=84$ and $m=85$, which are closest to the gray vertical bar at position $m=84.5$ on the left. The jump in the sketched FOM error at $m=85$ is caused by a Ritz value being very close to a quadrature node.}
    \label{fig:convdiff}
\end{figure}

In the quadrature-based methods, we use a Gauss--Chebyshev rule after applying the variable transformation $x = (1-t)/(1+t)$ which maps the interval $[0,+\infty)$ to $(-1,1]$; see also~\cite[Section~4.1]{FrommerGuettelSchweitzer2014a}. The number of quadrature nodes is determined adaptively as described in section~\ref{subsec:quad}. For simplicity, we have  determined the quadrature rule once for the maximum Krylov dimension $m_{\max} = 200$ and then kept it fixed for all iterations. This way, $\ell = 45$ quadrature nodes were used for all $m=1,\ldots,m_{\max}$.

As can be seen in the left plot of Figure~\ref{fig:convdiff}, the error of all sketched approximations follows that of the best approximation quite well and also inherits the superlinear convergence. The sketched FOM approximants show a \rev{less regular convergence} behavior than the sketched GMRES approximants, the latter following the best approximation error very closely. The error curves of the two sketched FOM approximants (quadrature-based and closed form) are visually indistinguishable, \rev{indicating that the quadrature rule is highly accurate and that the quadrature approximation can be ruled out as the source of the irregular sFOM convergence.}

To gain some insight into the less regular convergence behavior of both sketched FOM variants, we plot on the right of Figure~\ref{fig:convdiff} the Ritz values  $\Lambda(V_m^H S^H SAV_m)$ for the orders $m=84$ and $m=85$. Order $m=85$ is characterized by a spike in the error curve and we see that one of the corresponding Ritz values is very close, namely at $x\approx 0.392$, to a quadrature node at $t=-3.05\cdot 10^{-4}$. The Ritz values of order $m=84$, on the other hand, stay safely away from the quadrature nodes. Some of the eigenvalues $\Lambda(A)$ are also shown for information.

\subsection{Network example}\label{subsec:network}
We consider the nonsymmetric binary adjacency matrix \texttt{wiki-Vote} of size $N=8,\! 297$ in the SNAP collection \cite{snapnets}.  The function to compute is $f(A)\vb$ where $f(z)=e^{-z}$ and $\vb$ is the vector of all ones. As in the previous example, $S$ is a subsampled randomized DCT with sketching parameter fixed at $s=2 m_{\max} =100$, independent of $m$. For sFOM and sGMRES we run truncated Arnoldi with truncation parameter $k=2,3,4$. The resulting three convergence plots are shown in Figure~\ref{fig:network}.  For the construction of the quadrature rule for the integral representations we use the  approach from \cite{FrommerGuettelSchweitzer2014a}, with a parabolic integration contour $\Gamma$ parameterized as 
\[
\gamma(t) = a + it  - c\zeta^2, \quad t\in\mathbb{R},
\]
 and with the parameters $a,c$ chosen so that the Ritz values are surrounded. A fixed quadrature rule with $\ell=100$ nodes is used in all cases and some of its nodes are shown in the fourth plot of Figure~\ref{fig:network}. Note how some of the eigenvalues of $A$, in particular the outliers, are well approximated by some of the Ritz values.
 
 We also include the quadrature-based restarted FOM \rev{code}  \texttt{funm\_quad} \cite{FrommerGuettelSchweitzer2014a} with restart lengths $r=2,3,4$ in Figure~\ref{fig:network}. The overall memory requirement of the orthogonalization for truncated Arnoldi and \texttt{funm\_quad} are comparable when $k=r$, namely they both require the storage of $k+1=r+1$ Krylov basis vectors of size~$N$.  We find that even with a truncation length as low as $k=2$, all sketched methods exhibit a surprisingly robust convergence, while restarted FOM requires a restart length of at least $r=4$ to converge steadily. In all cases, the sketched methods follow quite closely the error of the best approximant obtained by projecting the exact $f(A)\vb$ onto $\mathcal{K}_m(A,\vb)$, while restarting prevents or delays the convergence. \rev{We also depict the condition number of the non-orthogonal Krylov basis $V_m$ (multiplied by the unit round-off $u \approx 2.2 \cdot 10^{-16}$). Interestingly, the sketched Krylov methods continue to perform well and converge without problems even when the condition number of the basis reaches (or even exceeds) $u^{-1}$, while this is typically mentioned as a source of instablities in the literature; see, e.g.,~\cite{nakatsukasa2021fast}.}

\begin{figure}
    \centering
    \begin{minipage}{0.49\textwidth}
    \hspace*{-3mm}\includegraphics[width=1.1\textwidth]{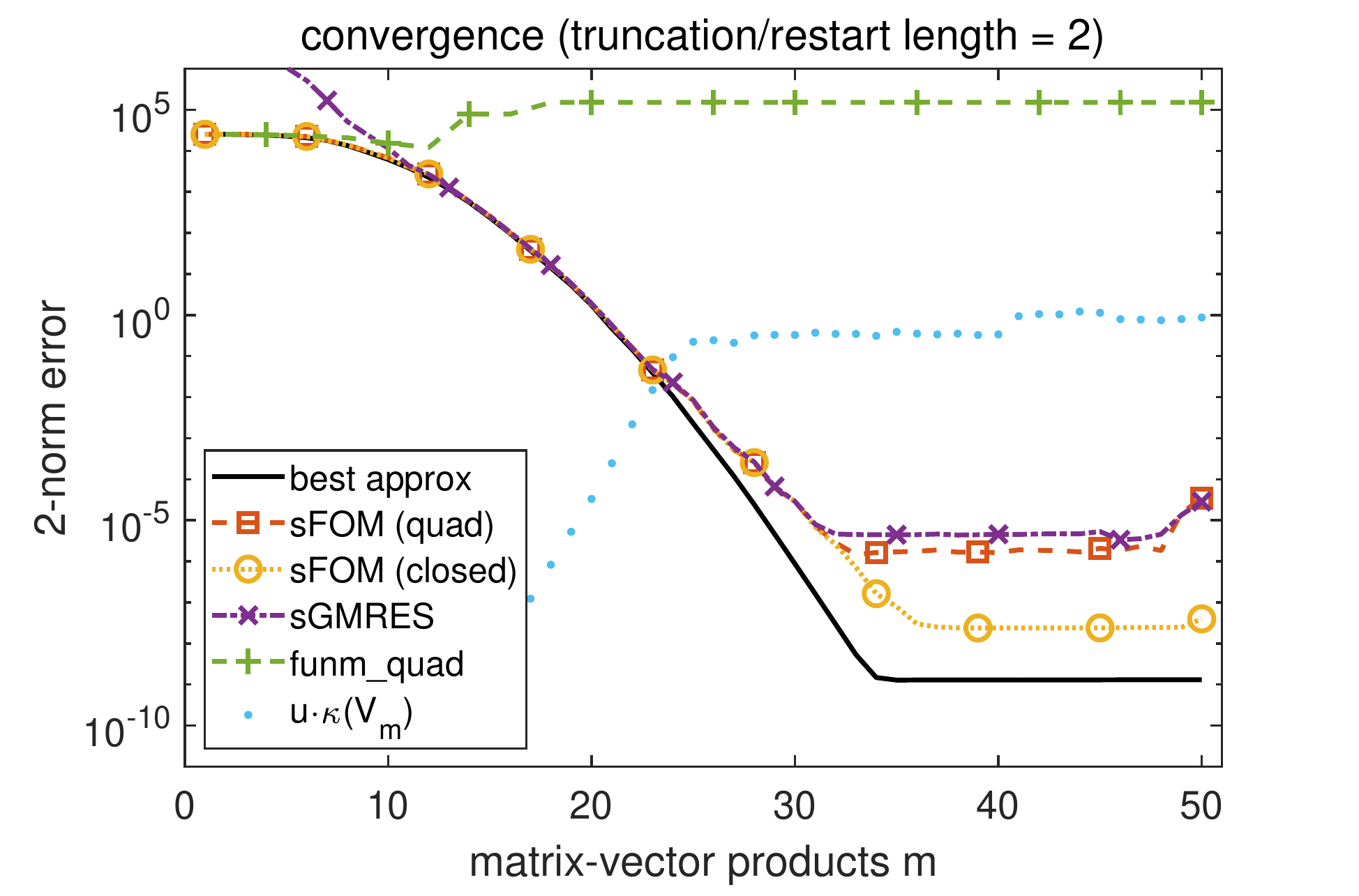}
    \end{minipage}
    \begin{minipage}{0.49\textwidth}
    \hspace*{-0mm}\includegraphics[width=1.1\textwidth]{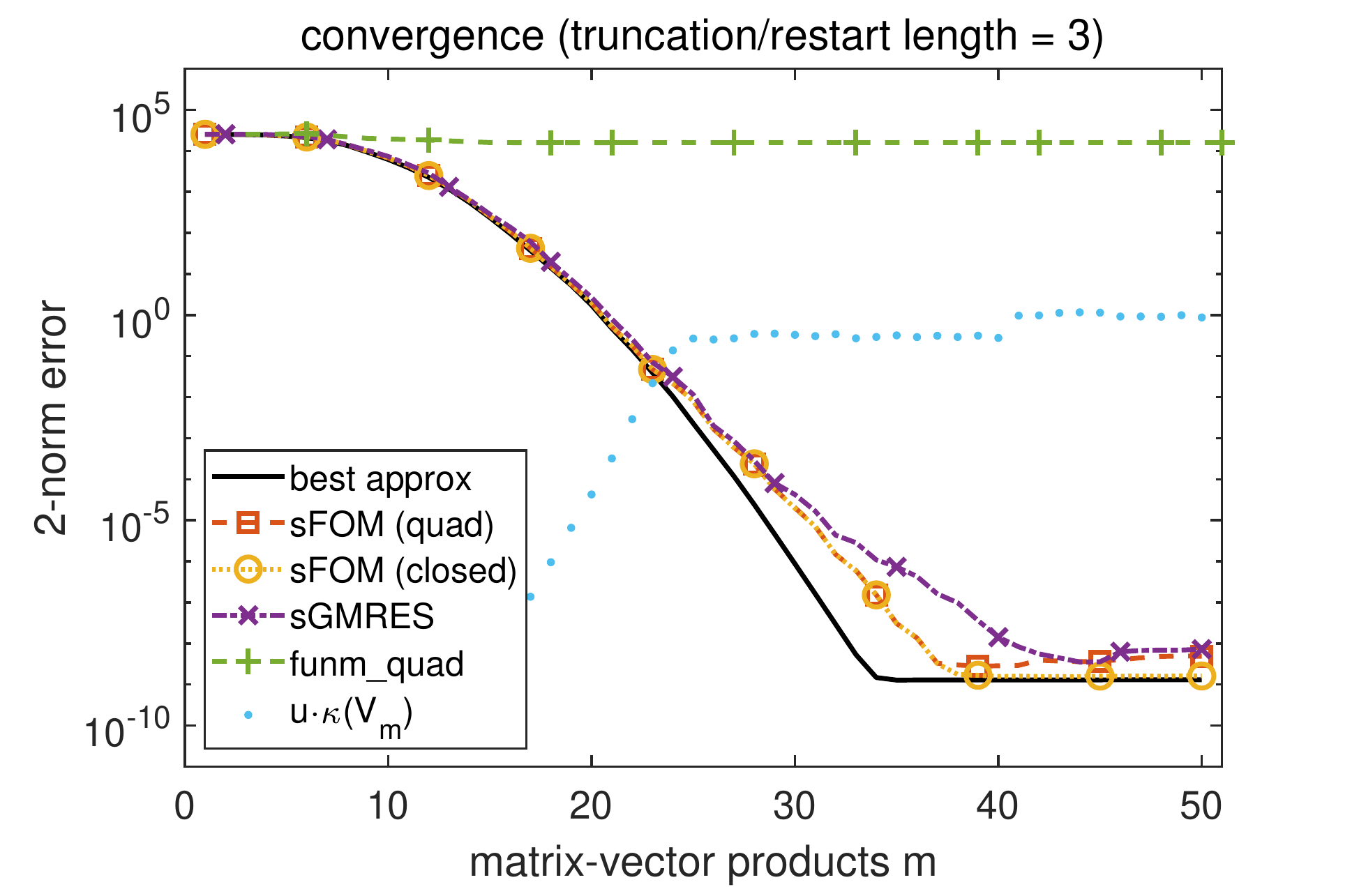}
    \end{minipage}
    \begin{minipage}{0.49\textwidth}
    \hspace*{-3mm}\includegraphics[width=1.1\textwidth]{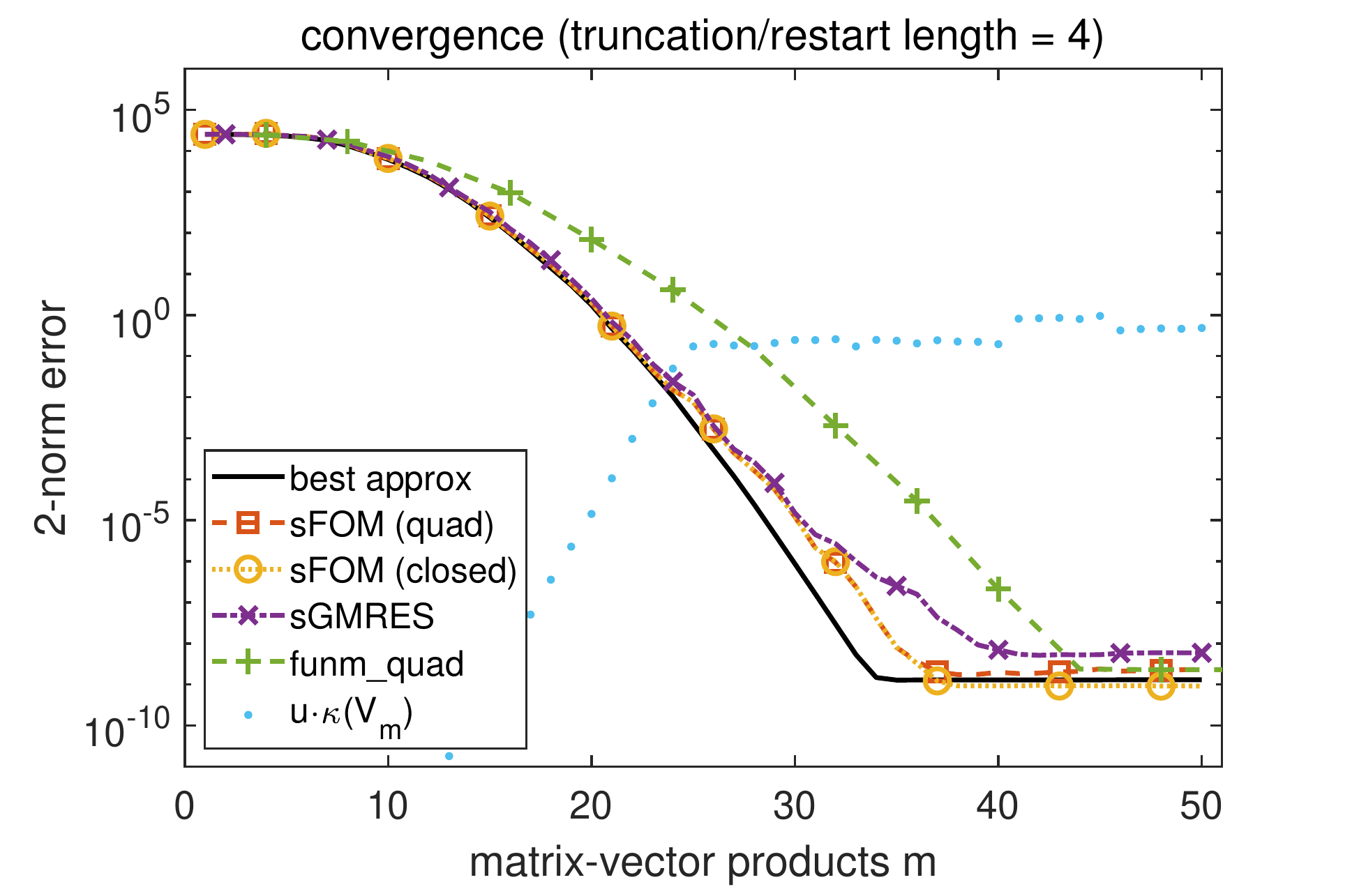}
    \end{minipage}
    \begin{minipage}{0.49\textwidth}
    \hspace*{-0mm}\includegraphics[width=1.1\textwidth]{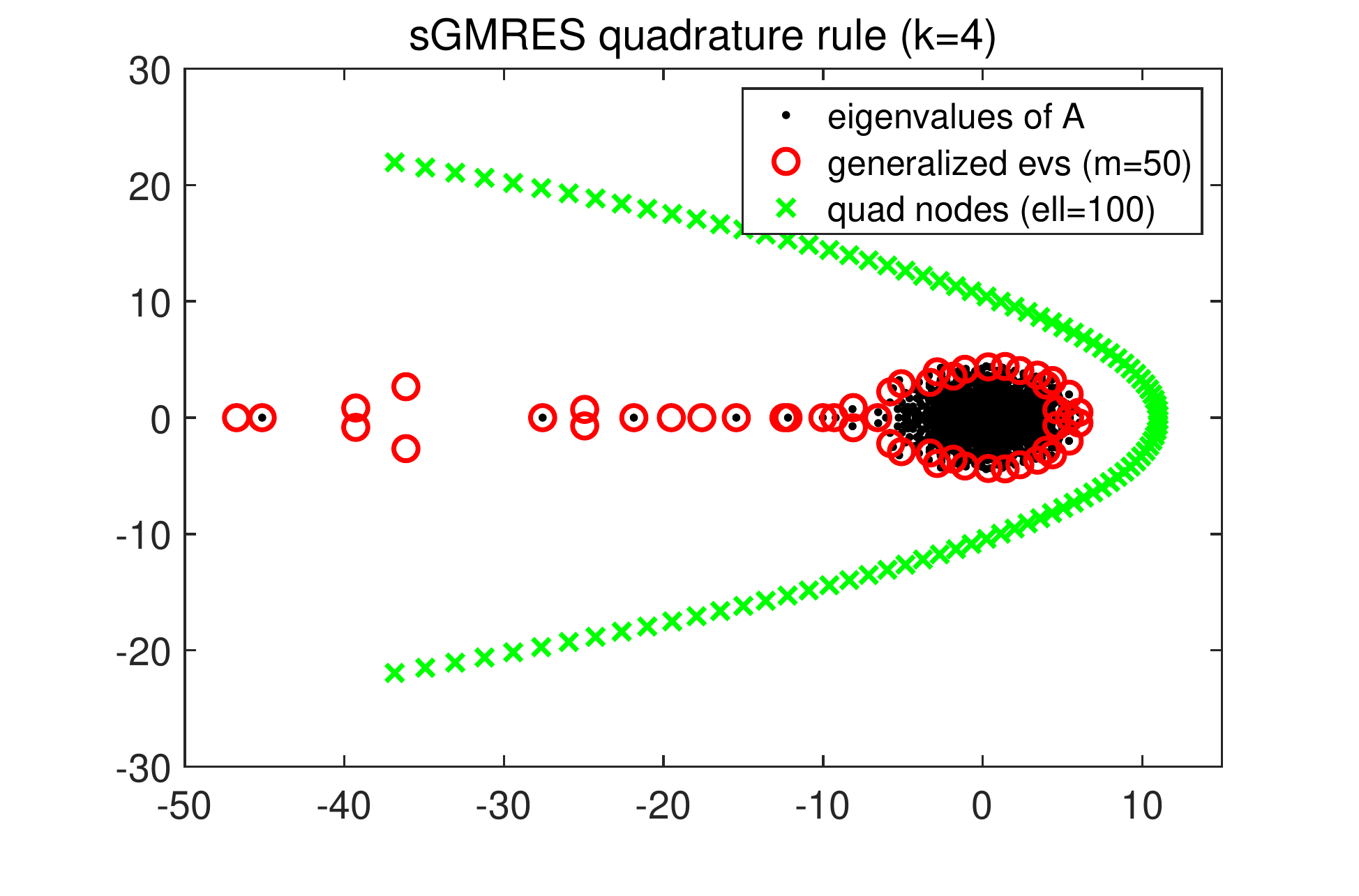}
    \end{minipage}
    \caption{Network example. The first three plots show the convergence of the sketched methods based on truncated Arnoldi with truncation parameter $k=2,3,4$. The error of the restarted Arnoldi approximation with restart length $r=k$ and the error of best approximation from the Krylov space $\mathcal{K}_m(A,\vb)$ is also shown\rev{, as well as the condition number of the truncated Krylov basis (multiplied by the unit round-off)}. The final plot shows the placement of $\ell=100$ complex-valued quadrature nodes on the parabolic contour relative to the Ritz values of order $m=50$.}
    \label{fig:network}
\end{figure}

\subsection{Lattice QCD}\label{subsec:qcd}

\begin{figure}
    \centering
    \begin{minipage}{0.49\textwidth}
    \hspace*{-5mm}\includegraphics[width=1.1\textwidth]{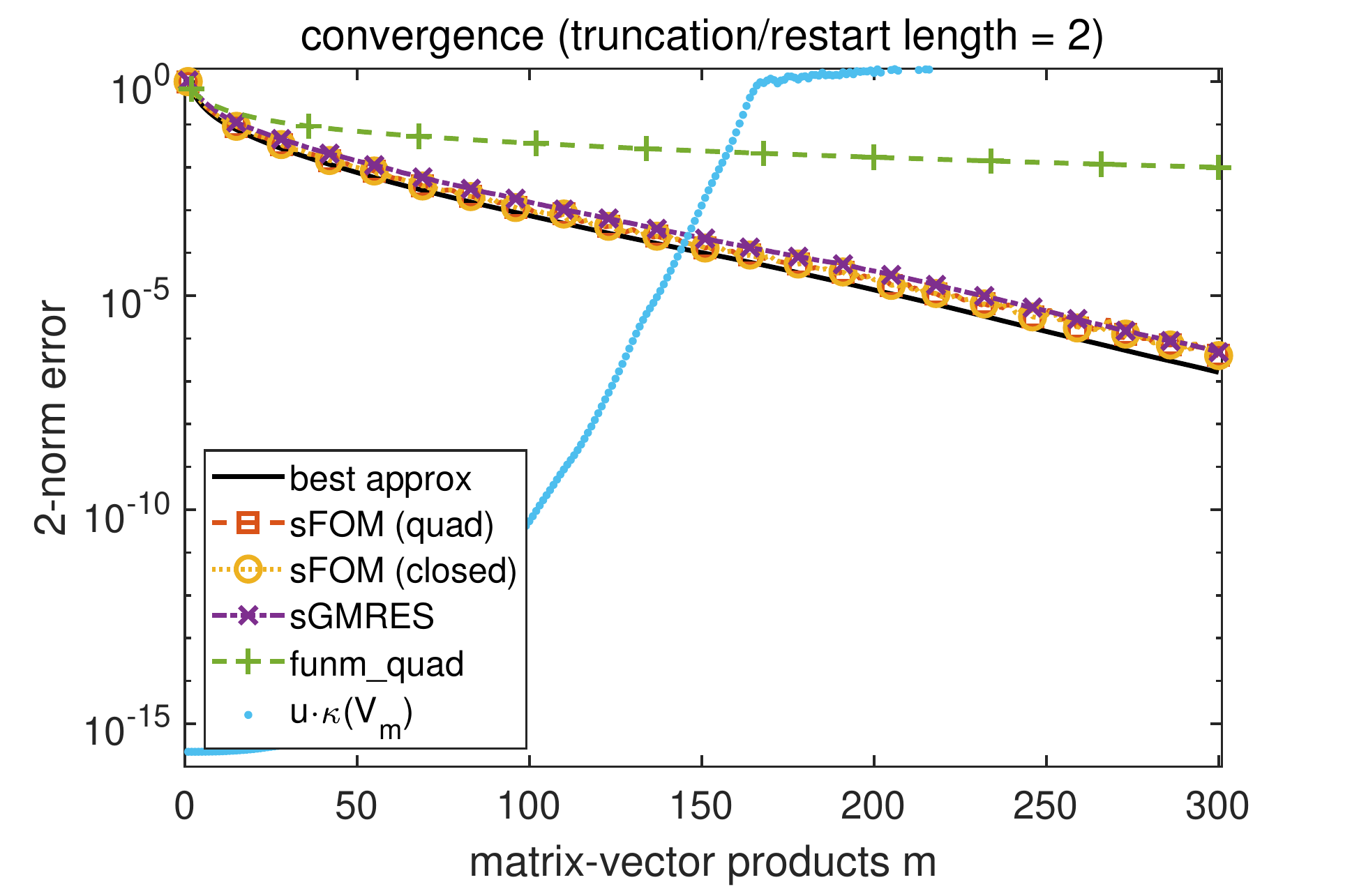}
    \end{minipage}
    \begin{minipage}{0.49\textwidth}
    \hspace*{-5mm}\includegraphics[width=1.1\textwidth]{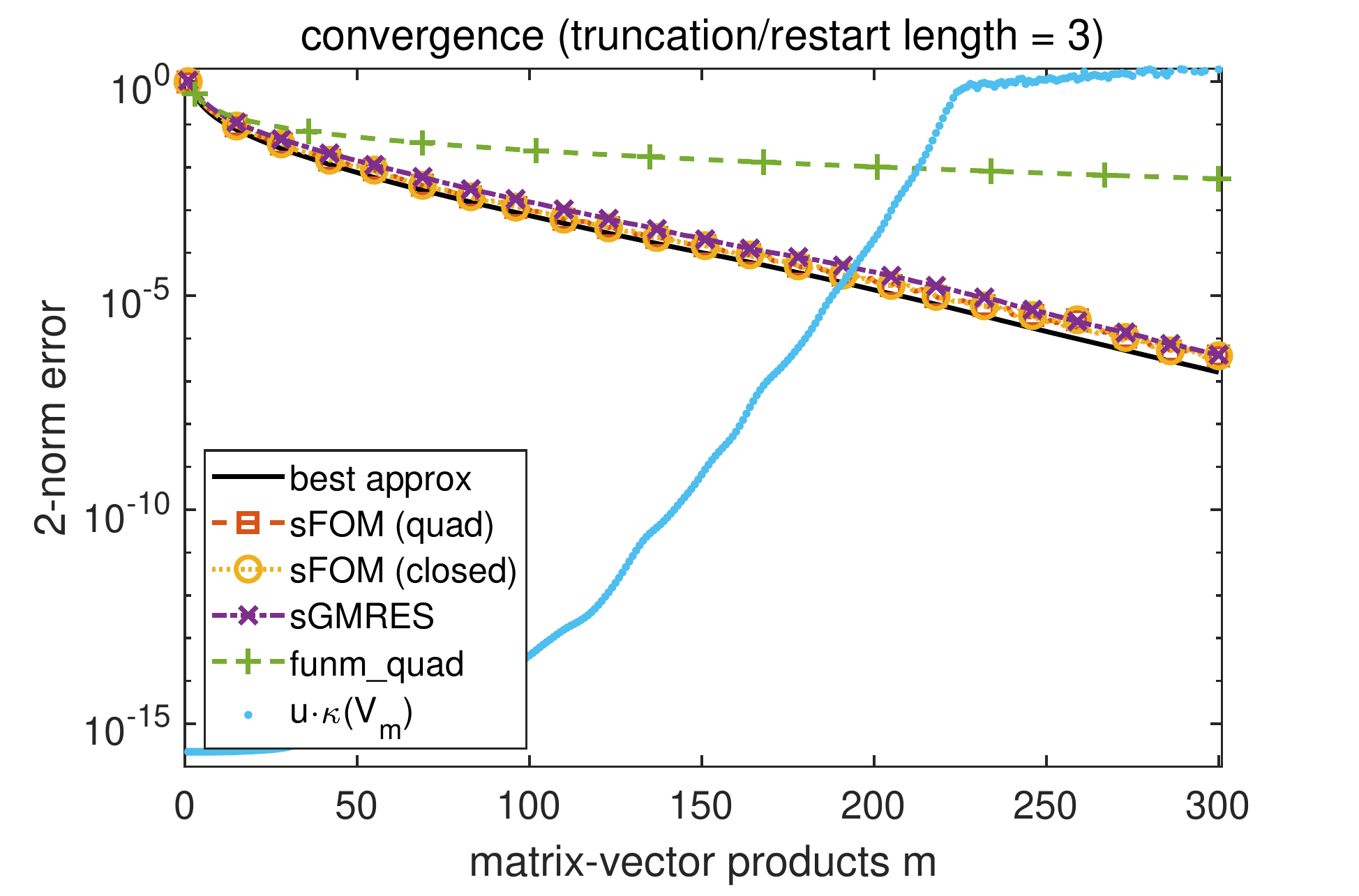}
    \end{minipage}
    \begin{minipage}{0.49\textwidth}
    \hspace*{-5mm}\includegraphics[width=1.1\textwidth]{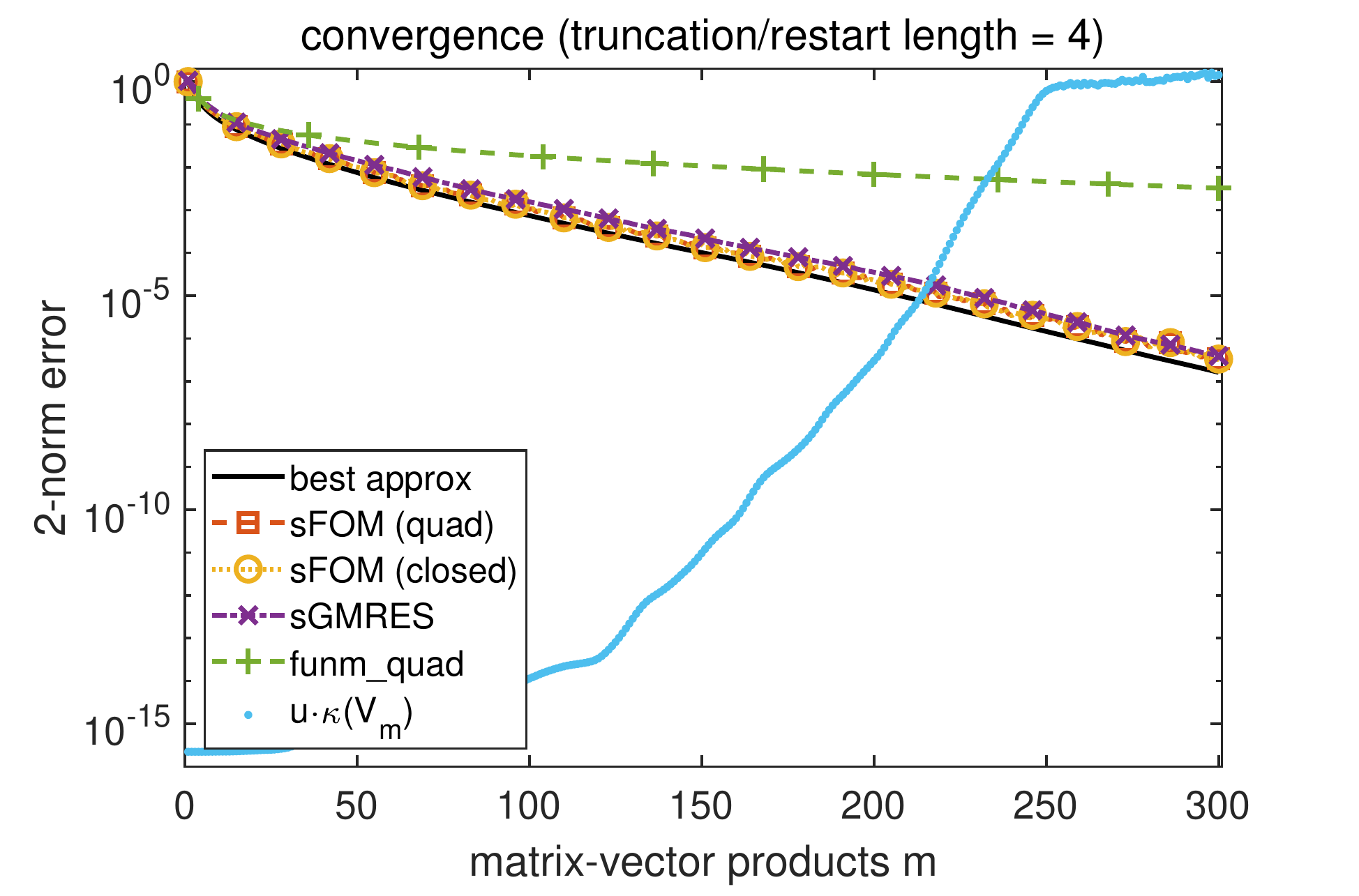}
    \end{minipage}
    \begin{minipage}{0.49\textwidth}
    \hspace*{-5mm}\includegraphics[width=1.1\textwidth]{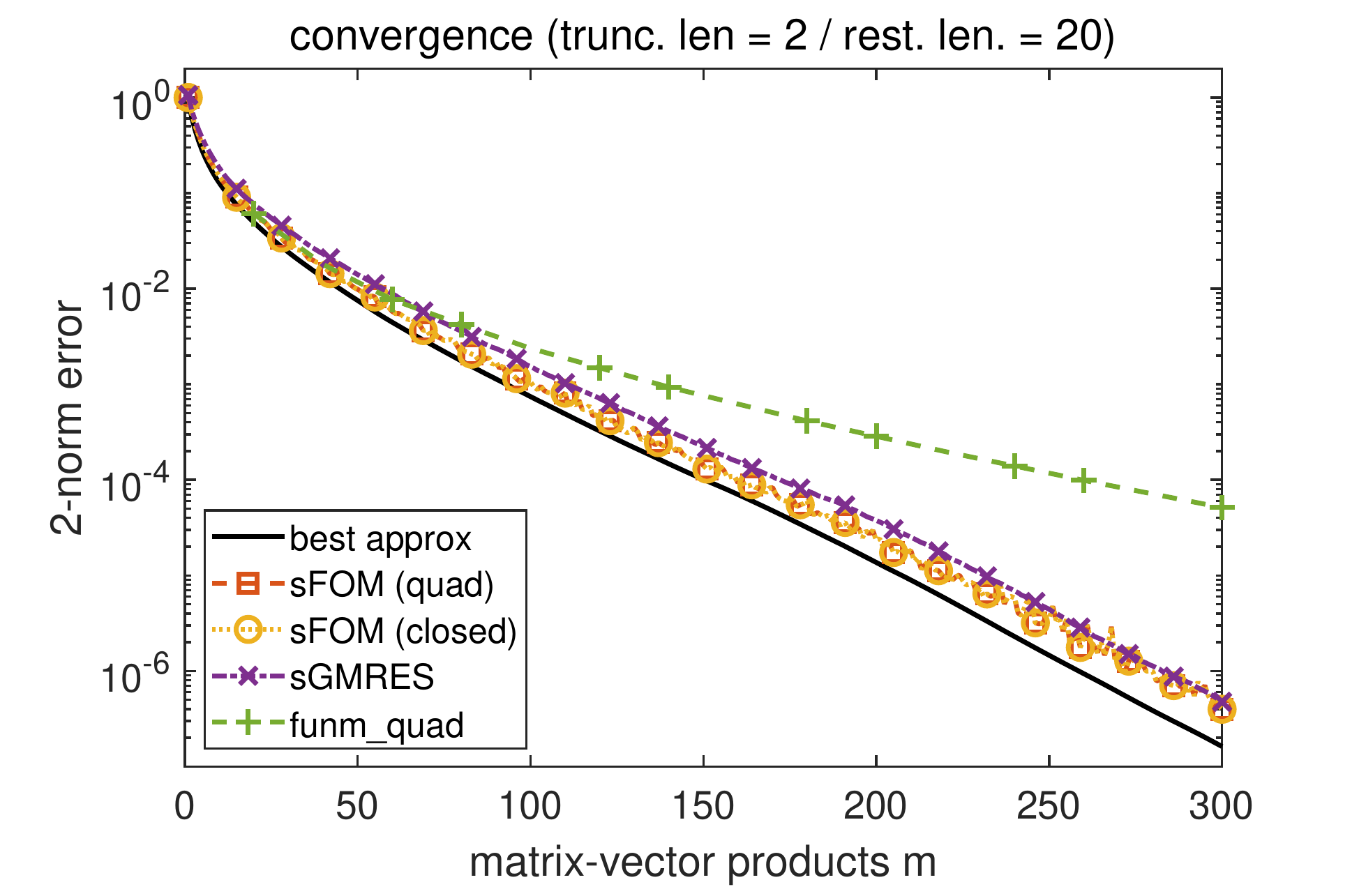}
    \end{minipage}
    \caption{Lattice QCD example. The first three plots show the convergence of the sketched methods based on truncated Arnoldi with truncation parameter $k=2,3,4$. The error of the restarted Arnoldi approximation with restart length $r=k$ and the error of best approximation from the Krylov space is also shown\rev{, as well as the condition number of the truncated Krylov basis (multiplied by the unit round-off)}. The final plot shows the convergence of the $4$-truncated sketched methods compared to the restarted Arnoldi method with $r = 20$.}
    \label{fig:qcd}
\end{figure}

\emph{Quantum chromodynamics} (QCD) is the area of theoretical physics that studies the strong interaction between quarks and gluons, governed by the Dirac equation. To be able to perform simulations, in \emph{lattice quantum chromodynamics}, the Dirac equation is discretized on a four-dimensional space--time lattice with 12 variables at each lattice point, corresponding to all possible combinations of three colors and four spins. In order to preserve the so-called \emph{chiral symmetry} on the lattice, one needs to solve linear systems with the overlap Dirac operator \cite{Neuberger1998},
\begin{equation}\label{eq:overlap_operator}
N_{\rm ovl} := \rho I + \Gamma_5\sign(Q).
\end{equation}
In~\eqref{eq:overlap_operator}, $\rho > 1$ is a mass parameter, $Q$ represents a periodic nearest-neighbor coupling on the lattice, and $\Gamma_5$ is a permutation matrix. The matrix $Q$ is very large, sparse, complex and, in the presence of a nonzero \emph{chemical potential} (the situation we consider here), non-Hermitian. 

As $\sign(Q)$ cannot be explicitly computed for realistic grid sizes, one typically solves linear systems with \eqref{eq:overlap_operator} by an inner-outer Krylov method which only needs to access $\sign(Q)$ via matrix-vector products. At each outer Krylov iteration, one therefore has to compute $\sign(Q)\vb$ where the vector $\vb$ changes from one iteration to the next. Efficient preconditioners for the ``outer iteration'' for~\eqref{eq:overlap_operator} can be constructed based on, e.g., domain decomposition and adaptive algebraic multigrid. It then turns out that the ``inner iteration'' for evaluating $\sign(Q)\vb$ represents the by far most expensive part of the overall computation (see, e.g.,~\cite[Section~5.2]{BrannickFrommerKahlLederRottmannStrebel2016}), which makes any improvements in this area very welcome.

To show how the sign function fits into the framework considered here, write
\begin{equation}\label{eq:sign_relation}
\sign(Q)\vb = (Q^2)^{-1/2}Q\vb.
\end{equation}
Thus, when performing a Krylov iteration with $A=Q^2$ (which of course does not need to be formed explicitly), we can use the same Gauss--Chebyshev rule for the inverse square root as in section~\ref{subsec:convdiff}. 
We use a lattice configuration with $8$ lattice points in the temporal and each spatial direction, resulting in $N = 12\cdot8^4 = 49,\!152$ and choose $\vb = \ve_1$ as the first canonical unit vector. 

For the first part of this experiment, we construct a fixed Gauss--Chebyshev quadrature rule with accuracy parameter $\texttt{tol} = 10^{-7}$, which results in $\ell = 176$ quadrature points.  We use a maximum Krylov dimension of $m_{\max}=300$ and a fixed sketching parameter $s = 2m_{\max} = 600$.
As before, we compare to the quadrature-based restarted Arnoldi method from~\cite{FrommerGuettelSchweitzer2014a}, which is also used in state-of-the-art HPC code for simulation of overlap fermions~\cite{BrannickFrommerKahlLederRottmannStrebel2016}. We use  the truncation parameters $k = 2, 3, 4$ and the same restart lengths $r=k$. 
Additionally, we compare the sketched methods with $k=2$-truncated Arnoldi to restarted FOM with restart length $r=20$, a value used in realistic large-scale simulations of overlap fermions.

The results of the experiment are depicted in the four plots of Figure~\ref{fig:qcd}. We observe that all sketched approximations converge robustly and follow the error of the best approximation closely, while convergence is strongly delayed in the restarted methods for $r = 2, 3, 4$ (although in contrast to the network  example, the restarted method does make progress for all restart lengths). Even for the larger restart length $r = 20$ (which leads to much higher orthogonalization cost than in the sketched methods), convergence is much slower than for the sketching-based  approaches. \rev{Additionally, we again observe that convergence of the sketched methods takes place well after the point where the basis condition number exceeds $u^{-1}$.}

\begin{table}
\centering
\caption{\rev{Lattice QCD example. Truncation (resp.\ restart) length, required number of Krylov iterations, wall-clock time and relative error norm at the final iteration for the different discussed algorithms when invoked with a target accuracy of $10^{-5}$. Details on the experimental setup are given in the final paragraphs of section~\ref{subsec:qcd}}.}
\label{tab:qcd}
\rev{\begin{tabular}{l|cccc}
method & $k$ resp.\ $r$ & Krylov dim.\ & time & rel.\ error \\[1mm]
\hline\hline\\[-2mm]
sketched FOM (closed form) & 2 & 220 & 3.36s & $5.93 \cdot 10^{-6}$ \\
sketched FOM (quadrature) & 2 & 240 & 7.66s & $3.17 \cdot 10^{-6}$ \\
sketched GMRES (quadrature) & 2 & 220 & 6.64s & $7.88 \cdot 10^{-6}$ \\
\texttt{funm\_quad} &  2 & 340 & 3.92s & $1.25 \cdot 10^{-5}$ \\
standard FOM & -- & 220 & 7.87s & $3.67 \cdot 10^{-6}$ \\
\end{tabular}}
\end{table}

In the second part of this experiment we measure the run time of the different methods, but now with all quadratures performed fully adaptively as explained in section~\ref{subsec:quad}. We use the same problem setup as before and aim for reaching an overall relative error norm below $10^{-5}$. We compare the run time of the sketched methods with truncation length $k = 2$ (i.e., at most 3 basis vectors need to be stored at a time) with that of restarted Arnoldi with restart length $r = 20$ (i.e., at most 21 vectors basis need to be stored at a time) \rev{and with standard FOM  which constructs an orthonormal basis of the Krylov space via modified Gram--Schmidt orthogonalization (but without reorthogonalization).} For a fair comparison, we check the error estimate \eqref{eq:sketched_err_est2} in the sketched methods \rev{(and a similar estimate in standard FOM)} every 20 iterations (i.e., $d = 20$), as \texttt{funm\_quad} also checks for convergence at the end of each restart cycle. I.e., in all quadrature-based methods (sketched or non-sketched), integrals need to be evaluated every 20th matrix-vector product. We stop the iteration once the error estimate is below the desired tolerance of $10^{-5}$. Note that the stopping condition in \texttt{funm\_quad} is also based on comparing approximants from subsequent restart cycles. As tolerance \texttt{tol} for the quadrature rules in the sketched methods we choose the same value $10^{-5}$ as for the desired error accuracy. In \texttt{funm\_quad}, we had to use the slightly more stringent tolerance of $10^{-6}$, as the method otherwise stagnated around an error norm of $10^{-4}$.

The results of this experiment are reported in Table~\ref{tab:qcd}. Among all methods, sketched FOM using the closed form~\eqref{eq:sfom2} runs the fastest, which is to be expected as it needs the smallest number of matrix-vector products, uses a short recurrence orthogonalization and has close to no overhead for things like quadrature. In comparison to restarted Arnoldi (\texttt{funm\_quad}), the second fastest method, sketched FOM saves about \rev{15\%} of run time and also reaches a higher accuracy. The quadrature-based sketching methods need slightly less than twice the time of restarted Arnoldi but also have much lower memory consumption. The bottleneck in the quadrature-based methods is the efficient evaluation of integrals, which make up the largest part of the run time. In the sketched GMRES approximation, approximately 42\% of the run time is spent in matrix-vector products, 52.5\% for computations related to evaluating the quadrature rule, 3.5\% for computing sketches, and 1.5\% for orthogonalization. \rev{Standard FOM (without sketching or restarting) is the slowest of all tested methods as---already for the rather moderate Krylov dimension in this example---orthogonalization cost becomes the dominating factor. For larger and more difficult problems where a higher Krylov dimension is necessary, the gains provided by sketching can be expected to be even more pronounced. This example clearly illustrates that sketching and restarting techniques are very relevant for nonsymmetric matrices even when memory is not limited.}

We end with a few further comments on how to best interpret the results above. In the QCD model problem we consider here, matrix-vector products are extremely expensive compared to inner products ($Q$ contains 49 nonzeros per row and needs to be applied twice per iteration). In situations were matrix-vector products are cheaper, the difference in run time between sketched FOM and restarted Arnoldi would be much higher, as orthogonalization then makes up a larger fraction of the cost in the restarted methods. The overhead in the quadrature-based methods can likely be reduced by a more sophisticated implementation. In particular, this would also be highly dependent on the computing environment (as quadrature rules can of course be evaluated in a  parallelized fashion) and is therefore beyond the scope of this work. Also keep in mind that the cost of quadrature mainly scales with~$m$ and~$s$, but not with the matrix size~$N$. Thus, if $N$ is increased, the quadrature overhead will become negligible compared to cost of matrix-vector products and orthogonalization. Further, in the quadrature-based restarted Arnoldi method the quadrature rule \emph{needs} to be evaluated after every restart cycle (i.e. every $r=20$ matrix-vector products) in order to compute the update. This is \emph{not} necessary with sketching, where the quadrature needs to be evaluated only once for forming the final approximant. However, if error monitoring is needed, then intermediate approximants may still need to be computed.

\rev{
\subsection{Fractional graph Laplacian example}\label{subsec:fractional_laplacian}

\begin{figure}
    \centering
    \begin{minipage}{0.49\textwidth}
    \hspace*{-5mm}\includegraphics[width=1.1\textwidth]{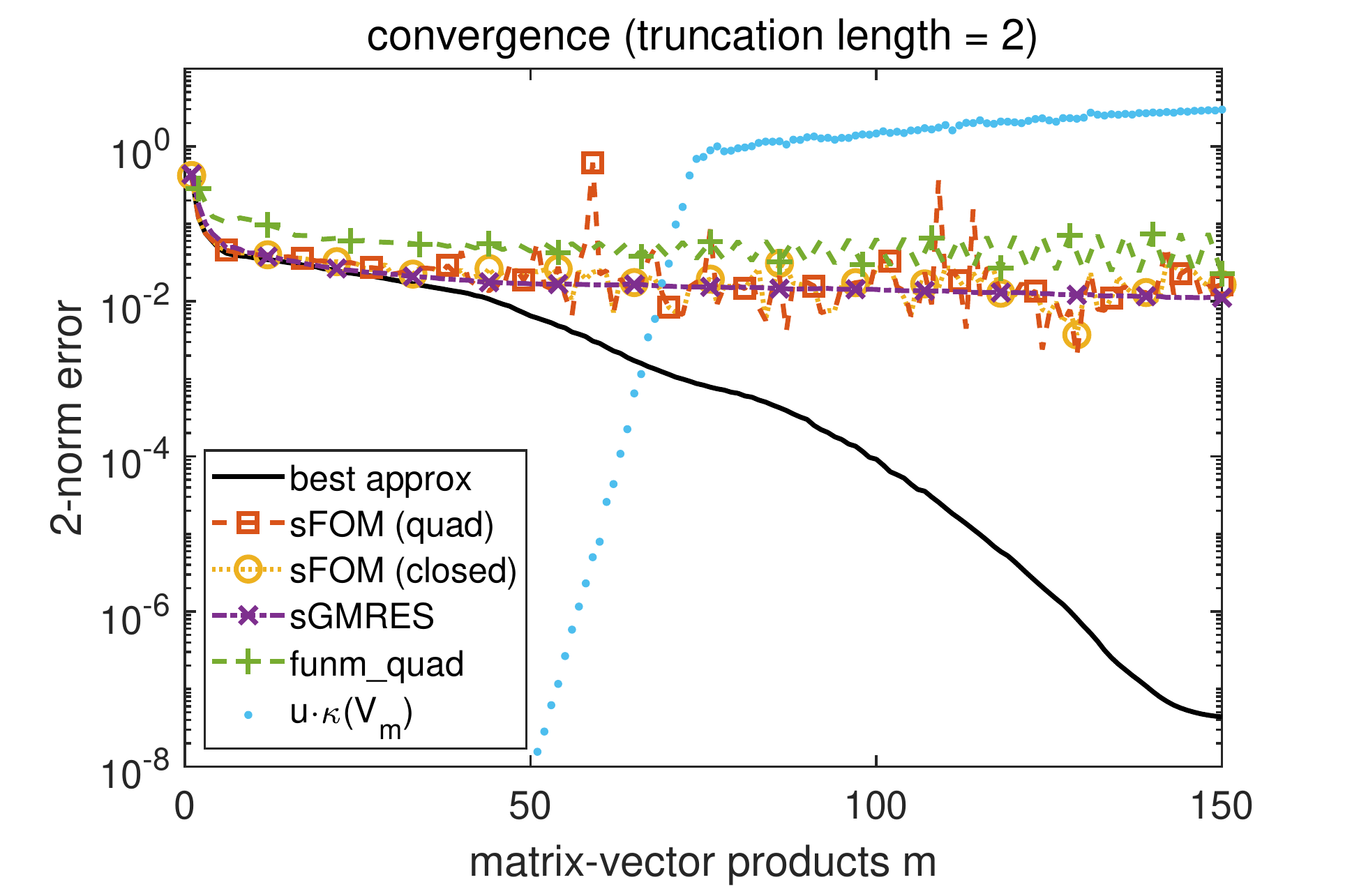}
    \end{minipage}
    \begin{minipage}{0.49\textwidth}
    \hspace*{-5mm}\includegraphics[width=1.1\textwidth]{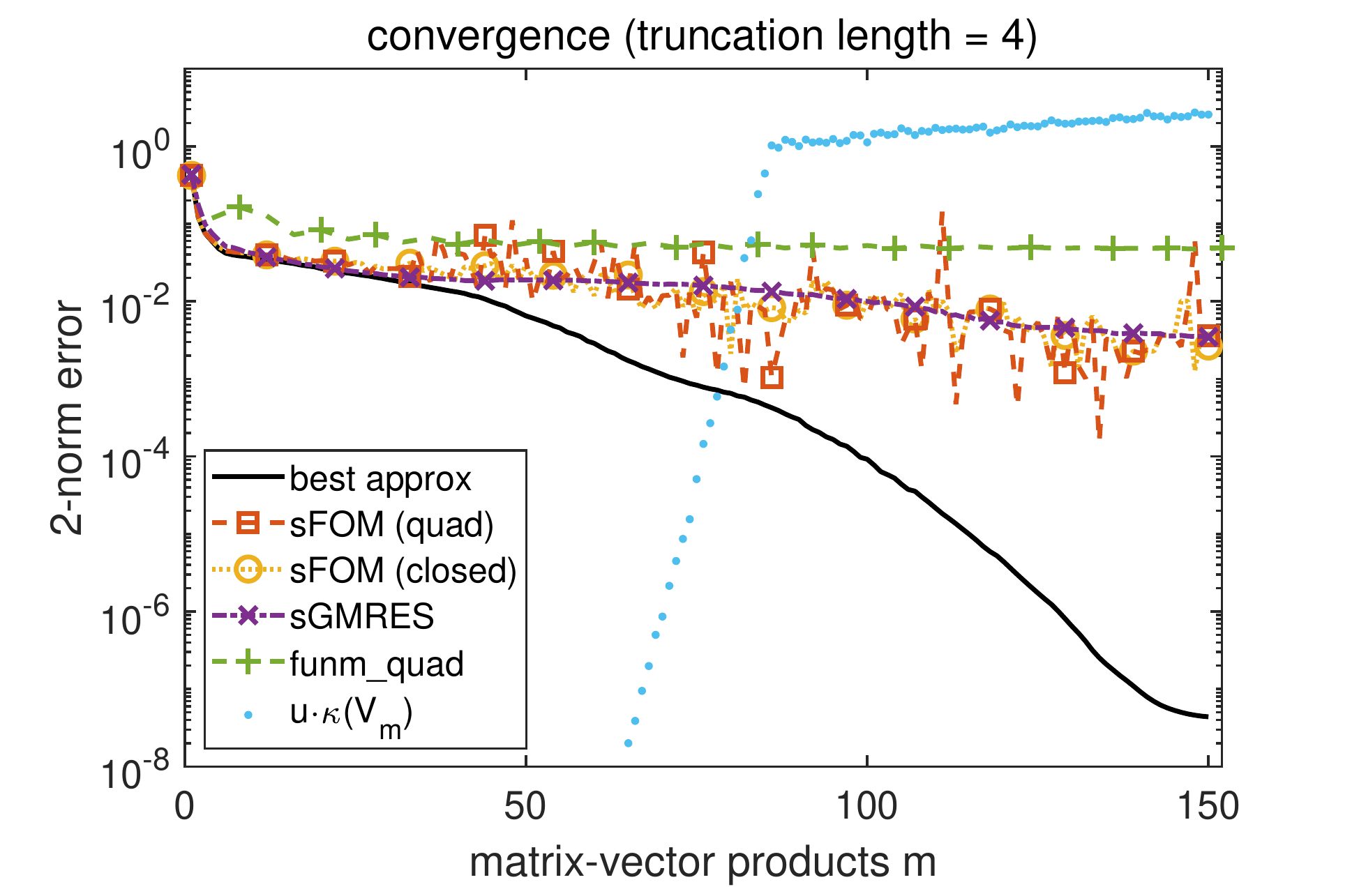}
    \end{minipage}
    \caption{Fractional graph Laplacian example. Convergence of the sketched methods based on truncated Arnoldi with truncation parameter $k=2,4$ for approximating $L^{1/2}\vb$. The error of the restarted Arnoldi approximation with restart length $r=k$ and the error of best approximation from the Krylov space is also shown, as well as the condition number of the truncated Krylov basis (multiplied by the unit round-off).}
    \label{fig:graph_laplace1}
\end{figure}

\begin{figure}
    \centering
    \begin{minipage}{0.49\textwidth}
    \hspace*{-5mm}\includegraphics[width=1.1\textwidth]{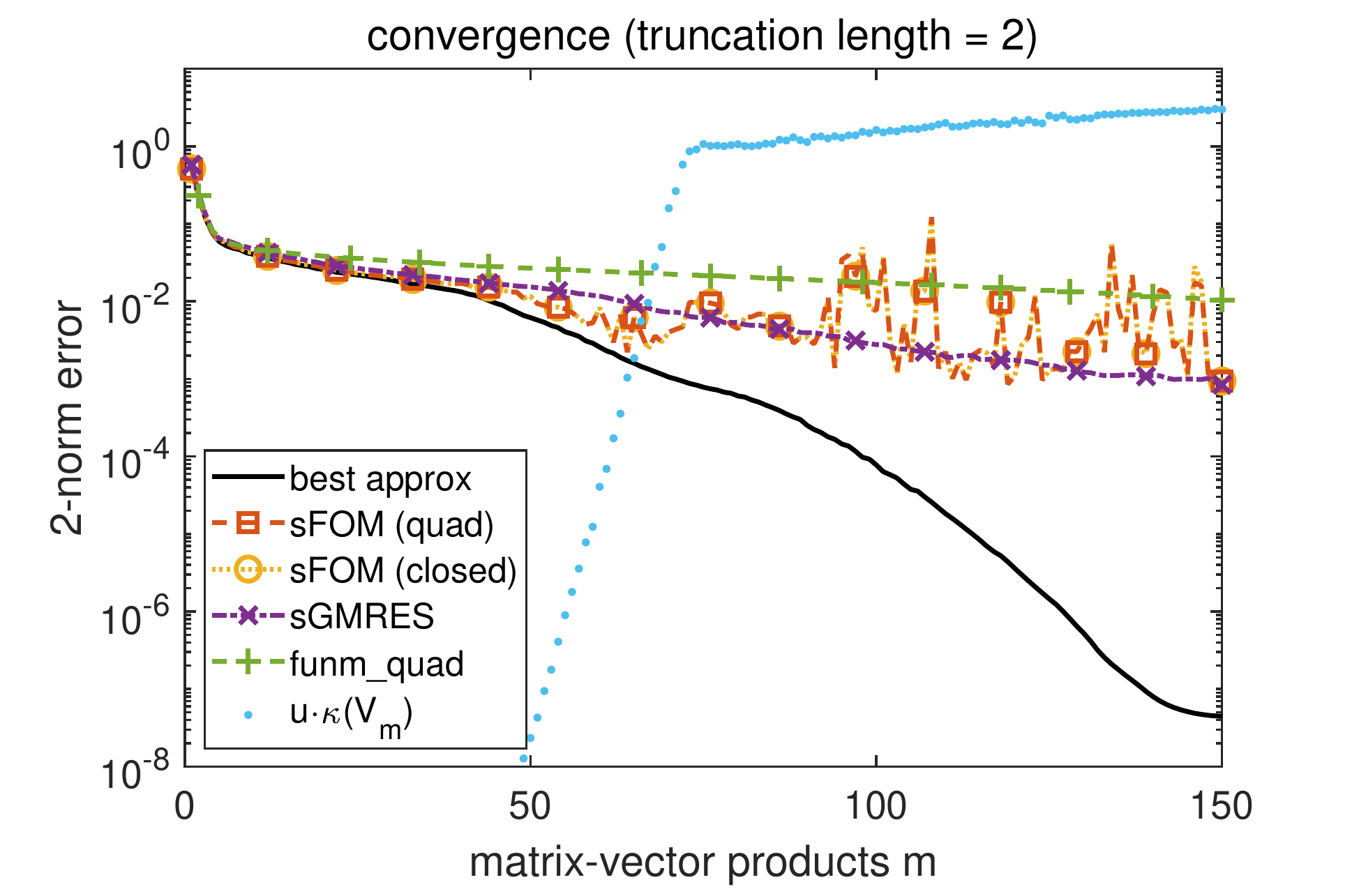}
    \end{minipage}
    \begin{minipage}{0.49\textwidth}
    \hspace*{-5mm}\includegraphics[width=1.1\textwidth]{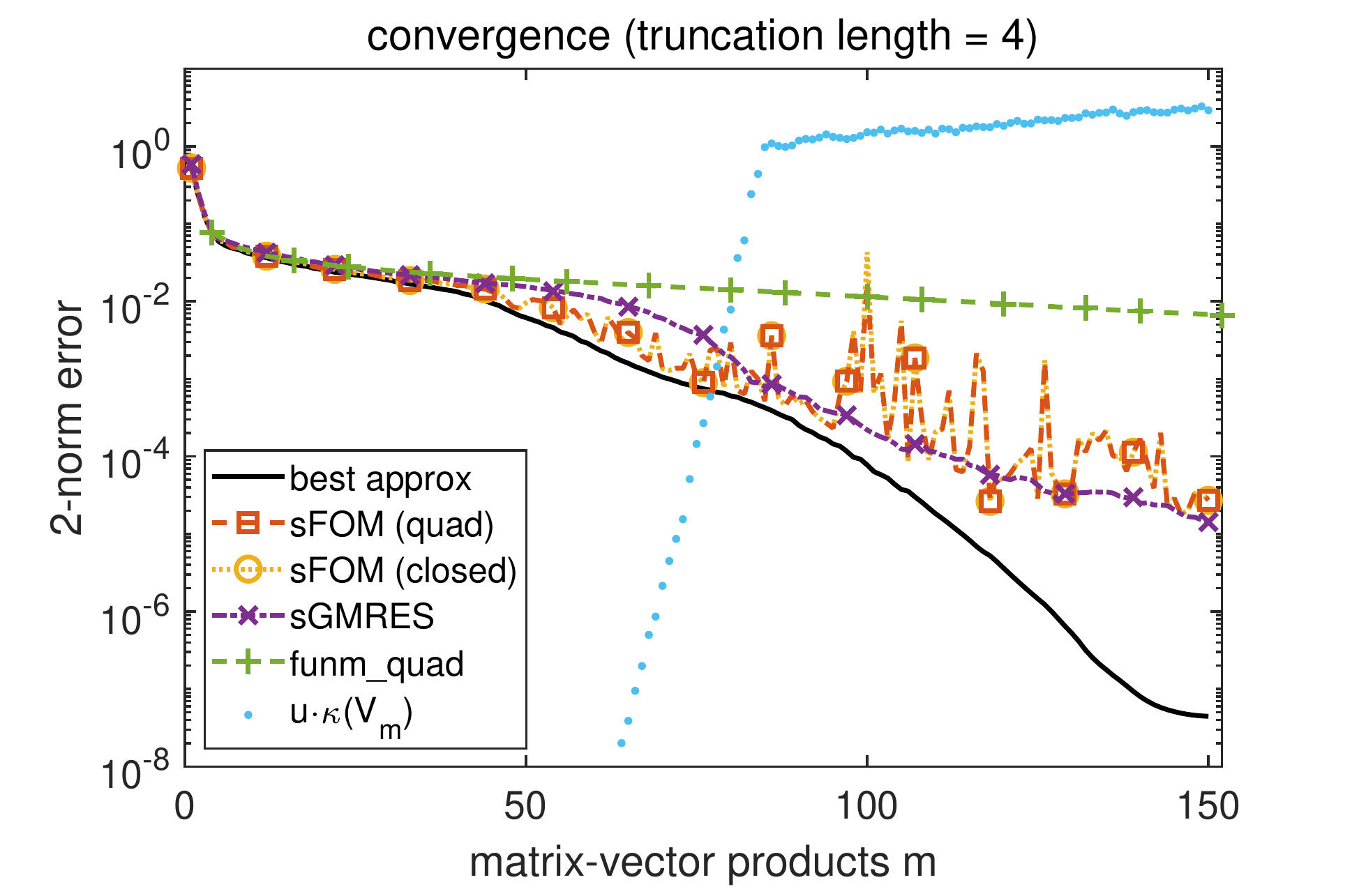}
    \end{minipage}
    \caption{Fractional graph Laplacian example. Convergence of the sketched methods based on truncated Arnoldi with truncation parameter $k=2,4$ for approximating $L^{-1/2}(L\vb)$. The error of the restarted Arnoldi approximation with restart length $r=k$ and the error of best approximation from the Krylov space is also shown, as well as the condition number of the truncated Krylov basis (multiplied by the unit round-off).}
    \label{fig:graph_laplace2}
\end{figure}

We end this section with an experiment that highlights possible problems and limitations in our approach. The example we consider here is taken from~\cite[Section~5.1.3]{cortinovis2022speeding}. We let $A \in \R^{6,301 \times 6,301}$ be the (nonsymmetric) adjacency matrix of the network \texttt{p2p\_Gnutella08} from the SuiteSparse matrix collection (\url{https://sparse.tamu.edu/}) and let $L = D_{in}-A$ be its \emph{in-degree Laplacian}, i.e., $D_{in}$ is a diagonal matrix containing the in-degree of all nodes in the network. Consequently, $L$ has zero column sums and is thus singular, with its spectrum contained in the closed right half-plane. We are interested in approximating $L^{1/2}\vb$, i.e., the action of the \emph{fractional Laplacian}, where $\vb$ is a randomly chosen canonical unit vector.

We again compare the same methods as before\footnote{\rev{Note that the published version of \texttt{funm\_quad} does not natively support the square root and we added it to the implementation using the approach outlined in~\cite[Corollary~3.6]{FrommerGuettelSchweitzer2014a}}}, with truncation (or restart) length $2$ and $4$. The results of this experiment are depicted in Figure~\ref{fig:graph_laplace1}, and it is clearly visible that all considered methods fail for this problem. While the error of the best approximation decreases smoothly (and superlinearly), the sketched and restarted methods do not converge to the solution at all or at least converge extremely slowly after a short initial phase in which they closely follow the error of the best approximation. Interestingly, deteriorating convergence begins long before the truncated Krylov basis starts becoming ill-conditioned. A reason for the unsatisfactory behavior of the sketched methods is likely the occurrence of sketched Ritz values on (or very close to) the negative real axis, i.e., the branch cut of the square root. As the origin is part of the field of values of $L$, in light of~\eqref{eq:inclusion_sketched_ritz}, these ``critical'' sketched Ritz values can occur for any $\varepsilon > 0$, i.e., irrespective of how good the subspace embedding is.

A simple trick that can be used for improving the convergence of polynomial Krylov methods for fractional graph Laplacians is to rewrite $L^{1/2}\vb = L^{-1/2}(L\vb)$ and then approximate the action of the inverse square root using a Krylov space built with starting vector $L\vb$. While $L$ is not invertible (and thus does not actually have an inverse square root), the initial multiplication $L\vb$ removes the contributions from the nullspace of $L$ from the starting vector, so that all subsequent computations happen in a space on which $L$ is an invertible operator (at least in exact arithmetic). We repeat our experiment using this approach and depict the results in Figure~\ref{fig:graph_laplace2}. We observe that convergence is indeed  greatly improved. While the methods still do not track the error of the best approximation closely, sGMRES shows at least acceptable convergence (with hints of superlinear phases) when using a truncation length of~$4$. The sketched FOM method converges at a similar rate, but shows much more irregular error norms and large spikes, likely again caused by sketched Ritz values near the negative real axis. An interesting effect is visible for sGMRES with truncation length $k = 2$. When the error of the best approximation starts to converge superlinearly, this does not happen for sGMRES, but instead convergence continues at the same linear rate as before.

In conclusion, the above example illustrates that there are cases in which the sketched methods can fail, but it also suggests that this is typically bound to happen in situations were polynomial Krylov methods are expected to show unsatisfactory performance anyway (rational Krylov methods are more commonly used for fractional graph Laplacian computations exactly for this reason). Additionally, the example (together with the preceding ones in Section~\ref{subsec:convdiff}--\ref{subsec:qcd}) highlights that ill-conditioning of the truncated Krylov basis is typically \emph{not} the primary cause of instabilities and deteriorating convergence.}

\section{Conclusions}\label{sec:concl}

We have presented several new approaches to efficiently compute Krylov approximations to $f(A)\vb$ based on integral representations and randomized sketching. We have focused on two popular Krylov methods, namely FOM and GMRES. We have shown that the sketched FOM approximant admits a closed form and provided a convergence analysis of the sketched GMRES approximants for Stieltjes function of positive real matrices. Numerical experiments have demonstrated the potential of the sketching approach as an alternative to restarting.

The proposed approach also opens up a number of research questions. \rev{Firstly, it is crucial to better understand the numerical stability (or rather, potential sources of instability) in sketched Krylov methods. While by conventional wisdom,  instabilities occur once the condition number of the non-orthogonal Krylov basis exceeds the reciprocal of unit round-off, our experiments reveal that in many cases convergence still takes place in this setting. Experience from the convergence analysis and practical use of restarted Krylov methods, even when the Arnoldi restart/truncation length is as small $k=1$, suggests that the basis conditioning cannot be the only determining factor (see also our discussion after Corollary~\ref{cor:alice}).}

Another possible research direction addresses the choice of the sketching parameter $s$. Currently, a rough estimate of the number $m_{\max}$ of required Krylov iterations is needed in order to choose~$s> m_{\max}$, and the choice $s = 2m_{\max}$ used here is not rigorously justified. One possible idea would be a ``responsibly reckless'' approach (a term used in HPC; see, e.g., \cite{dong}), where an optimistic choice for $s$ will be made initially with a careful monitoring of the computation. If $s$ turns out to be too small, which needs to be automatically detected, the computation will be halted and redone with an increased value of~$s$, or with a completely different method.

A significant improvement in the efficiency of sketched GMRES method could be obtained by developing a fast evaluation of the integral in \eqref{eq:sgmres}. In our numerical experiments with the QCD example we found that replacing MATLAB's \texttt{pinv(X)} by \texttt{(X'*X)\textbackslash X'} yields significant speed-up. The action of the Moore--Penrose inverse $(tSV_m + SAV_m)^\dagger \vv$ on a vector $\vv$ is needed for $\ell$~distinct values of $t$ corresponding to the quadrature nodes ($SV_m$ can be assumed to have orthonormal columns).  

If the dimension $m$ of the required subspace becomes extremely large, say in the order of $10^4$ and above, then it may be necessary to combine  sketching with restarting. In the restarting approach  developed in \cite{FrommerGuettelSchweitzer2014a}, each restart cycle $c=1,2,\ldots$ amounts to the Krylov approximation of an error function $e_m^{(c)}(A)\vb$, where $e_m^{(c)}(z)$ is a scalar error function that is explicitly given in terms of the  interpolation nodes defining the restarted Arnoldi approximant. \rev{In view of the interpolation characterization given in Corollary~\ref{cor:alice}, it seems plausible that a similar restarting approach might be developed for the sketching-based methods. (If $f$ is already given as a rational function in partial fraction form, then each linear system could be treated independently and the usual FOM or GMRES restarting as in \cite{AfanasjewEtAl2008a} can immediately be applied.) Restarting would also allow the use of implicit deflation techniques, which can often mitigate convergence delays.}

\section*{Acknowledgements} We are grateful for insightful discussions with \rev{Oleg Balabanov, Alice Cortinovis,} Andreas Frommer, Daniel Kressner, and Yuji Nakatsukasa. \rev{We also thank the two anonymous referees for their insightful suggestions.}

\bibliographystyle{siam}
\bibliography{refs}
\end{document}